\def\dd{\displaystyle}
\def\eps{\varepsilon}
\def\pa{\partial}
\newtheorem{thm}{Theorem}[section]
\newtheorem{theorem}{Theorem}[section]
\newtheorem{proposition}[thm]{Proposition}
\newtheorem{lemma}[thm]{Lemma}
\newtheorem{preremark}[theorem]{Remark}
\newenvironment{remark}{\begin{preremark}\rm}{\end{preremark}}
\def\RR{{\mathbb R}}
\def\vphi{\varphi}
\def\vp {v}
\def\na{\nabla}
\title[]{Traveling waves for a boundary reaction-diffusion equation}
\author{L. Caffarelli}
\address{Department of Mathematics, University of Texas at Austin, Austin  TX, 78712, USA}
\email{caffarel@math.utexas.edu}
\thanks{L.C. partially supported by NSF Grant}
\author{A. Mellet}
\address{Department of Mathematics, University of Maryland,
College Park, MD 20742, USA.}
\email{mellet@math.umd.edu}
\thanks{A.M. partially supported by NSF Grant DMS-0901340}
\author{Y. Sire}
\address{Universit\'e Paul C\'ezanne, LATP, Marseille, France }
\email{sire@cmi.univ-mrs.fr}
\begin{document}

\begin{abstract}
We prove the existence of a traveling wave solution for a boundary reaction diffusion equation when the reaction term is  the combustion nonlinearity with ignition temperature.
A key role in the proof is plaid by an explicit formula for traveling wave solutions of a free boundary problem obtained as singular limit for the reaction-diffusion equation (the so-called high energy activation energy limit). 
This explicit formula, which is interesting in itself, also allows us to get an estimate on the decay at infinity of the traveling wave (which turns out to be faster than the usual exponential decay).
\end{abstract}

\maketitle

\section{Introduction}

\subsection{Setting of the problem}
In this paper, we consider the following boundary reaction  equation in the upper half plan $\RR^2_+=\{(x,y)\, ;\, x\in \RR,\; y>0\}$:
\begin{equation}\label{eq:model}
\left\{ 
\begin{array}{ll}
\dd \partial_t u -\Delta u=0,\qquad & \mbox{ in $\mathbb{R}^2_+ \times [0,T]$}\\[5pt]
\dd \frac{\partial u}{\partial \nu}=-f(u),\qquad & \mbox{on $\partial \mathbb{R}^2_+ \times [0,T]$}.
\end{array}
\right.
\end{equation}

We note that, in \eqref{eq:model}, the diffusion takes place in the upper half plane $\RR^2_+$ while the reaction is concentrated along the boundary $y=0$. 
This can be used to model, for instance, the combustion of an oil slick on the ground (or a forest fire), with the temperature diffusing nicely above ground.

The nonlinear reaction term $f$ is a combustion type nonlinearity, satisfying
$$\left\{
\begin{array}{l}
f:[0,1]\rightarrow [0,\infty) \mbox{ is lipschitz  continuous,} \\[5pt]
f(u) >0 \mbox{ for } u\in (0,\alpha) , \quad \mbox{ $f(u)=0$ otherwise,} 
\end{array}
\right.$$
for some $\alpha\in(0,1)$.
We denote
$$M=\int_0^1 f(u)\, du.$$

Note  that the ignition temperature problem usually found in the  literature is written for the temperature $T$ which is related to our unknown $u$ by the relation $T=1-u$. We choose to work with $u$ rather than with $T$ here, because we will be interested in the so-called high energy activation limit. This singular limit is somewhat easier to work with in our setting: It corresponds to the limit $\alpha \rightarrow 0$ with the total mass of $f$ being constant (in other words, $f$ approaches a Dirac mass concentrated at $0$).
This can be achieved by replacing $f$ with
$$ f_\delta(u) = \frac{1}{\delta} f\left(\frac{u}{\delta}\right).$$

In the stationary case, we are then led to 
\begin{equation}\label{eq:stat}
\left\{ 
\begin{array}{ll}
\dd -\Delta u^\delta=0,\qquad & \mbox{ in $\mathbb{R}^2_+ $}\\[5pt]
\dd \frac{\partial u^\delta }{\partial \nu}=-f_\delta(u^\delta),\qquad & \mbox{on $\partial \mathbb{R}^2_+$}, 
\end{array}
\right.
\end{equation}
which is equivalent to the following fractional reaction diffusion equation:
\begin{equation}\label{eq:half}
(-\Delta)^{1/2}w^\delta=-f_\delta(w^\delta) \quad \mbox{in
  $\mathbb{R}$}
\end{equation}   
where $w^\delta(x)=u^\delta(x,0)$.
The singular limit $\delta\to0$ for this problem has been studied, in particular in \cite{CRS}: The solution of (\ref{eq:stat})
 converges,  when $\delta\to 0$, toward a solution of the following free  boundary problem
\begin{equation}\label{eq:FBstat}
\left \{
\begin{array}{lll}
{\dd -\Delta u=0,} & \mbox{ in $\mathbb{R}^2_+$}\\[5pt]
{\dd \frac{\partial u}{\partial \nu} =0,} & \mbox{ in $\Omega(u)=\left \{y=0 \right \} \cap \left \{u>0 \right \} $}, \\[5pt]
\dd \!\!\!\!\!\! \lim_{\tiny \begin{array}{l} x \rightarrow x_0 \\  x \in \Omega(u) \end{array}} \frac{u(x,0)}{|x-x_0|^{1/2}}=2\sqrt{\frac{2M}{\pi}} ,\quad
& \mbox{for all  $x_0 \in
  \partial\Omega(u)$}
\end{array}
\right . 
\end{equation}
(the fact that the free boundary condition is proportional to $M^{1/2}$ follows from a simple scaling argument. The constant $2\sqrt{2/\pi}$ will be found as part of the computations in Section \ref{sec:statreg}).

Similarly, we thus expect the evolution problem \eqref{eq:model} to lead, in the high energy activation limit, to the free boundary problem
\begin{equation}\label{eq:FB}
\left \{
\begin{array}{lll}
{\dd \pa_t u -\Delta u=0,} & \mbox{ in $\mathbb{R}^2_+$}\\[5pt]
{\dd \frac{\partial u}{\partial \nu} =0,} & \mbox{ in $\Omega(u)$}, \\[5pt]
\dd \!\!\!\!\!\! \lim_{\tiny \begin{array}{l} x \rightarrow x_0 \\  x \in \Omega(u) \end{array}} \frac{u(x,0)}{|x-x_0|^{1/2}}=2\sqrt{\frac{2M}{\pi}},\quad
& \mbox{for all  $x_0 \in
  \partial\Omega(u)$.}
\end{array}
\right . 
\end{equation}
The goal of this paper is not to study the convergence of the solutions of \eqref{eq:model} toward those of \eqref{eq:FB} in this high energy activation limit. Instead, we are interested in proving the existence of traveling wave solutions of \eqref{eq:model}. We will however make use of the fact that traveling wave solutions of the free boundary problem \eqref{eq:FB} can be computed explicitly and can be used to control the solutions of \eqref{eq:model}. This is explained in detail in the next section.

\vspace{20pt}

\subsection{Main results}
The goal of this paper is to prove the existence of a traveling wave solution for (\ref{eq:model}), that is a  solution of the form 
$$ u(t,x,y) = \vp (x-ct,y)$$
with 
\begin{equation}\label{eq:limit}
\begin{array}{ll}
\vp (x,y)\longrightarrow 0 \qquad & \mbox{ as } x\rightarrow -\infty, \mbox{ for all $y\in\RR_+$,}\\
\vp (x,y)\longrightarrow 1 \qquad & \mbox{ as } x\rightarrow +\infty,  \mbox{ for all $y\in\RR_+$}.
\end{array}
\end{equation}
In particular, $c$ and $\vp (x,y)$ must solve:
\begin{equation}\label{eq:TW}
\left \{
\begin{array}{lll}
\Delta \vp +c\, \pa_x \vp =0,\,\,\, & \mbox{ in $\mathbb{R}^2_+$}\\[5pt]
\dd \frac{\partial \vp }{\partial \nu} = -f(\vp ),\,\,\,& \mbox{ on $\pa \RR^2_+$},
\end{array}
\right.
\end{equation}
Our main result is the following:
\begin{theorem}\label{thm:1}
There exists  $c>0$ and a function $\vp (x,y)$,  solution of (\ref{eq:limit})-(\ref{eq:TW}).
Furthermore, 
\begin{enumerate}
\item $x\mapsto \vp (x,y)$ is non-decreasing (for all $y\geq0$),
\item $y\mapsto \vp (x,y)$ is non-decreasing (for all $x\in\RR$) and $\lim_{y\to\infty } v(x,y)=1$,
\item there exists $C$ such that 
\begin{equation} \label{eq:asymp}
|1-\vp (x,0) | \leq \frac{2}{\sqrt \pi}\int_{\sqrt{c x}}^\infty e^{- z^2} \, dz \qquad \mbox{ as } x\to +\infty .
\end{equation}
\end{enumerate}
\end{theorem}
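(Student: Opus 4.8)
The plan is to construct the traveling wave by a vanishing-viscosity / regularization-and-compactness scheme, working on a sequence of truncated strips and then passing to the limit. First I would fix a large parameter $a>0$ and look for a solution of \eqref{eq:TW} on the bounded domain $(-a,a)\times(0,a)$, imposing $\vp=0$ on $\{x=-a\}$, $\vp=1$ on $\{x=a\}$, a Neumann (or periodic-in-$y$ truncation) condition on $\{y=a\}$, and the nonlinear boundary condition $\partial_\nu\vp=-f(\vp)$ on $\{y=0\}$. For this I would treat the speed $c$ as an unknown and add a normalization condition that pins down the translation invariance, e.g. $\vp(0,0)=\alpha$ (the ignition temperature). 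Existence on the truncated domain follows from a Leray–Schauder degree argument (or a monotone iteration scheme using sub/supersolutions $0$ and $1$), once one has \emph{a priori} bounds $0\le\vp\le1$ from the maximum principle and a uniform bound on $c$. The uniform bound on $c$ is obtained by integrating the equation: multiplying $\Delta\vp+c\,\partial_x\vp=0$ against suitable test functions and using the boundary relation gives $c\int\!\!\int |\nabla\vp|^2 = M$ up to boundary terms, which both bounds $c$ from above and, crucially, bounds it \emph{away from zero}, using here that $M=\int_0^1 f>0$.

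Next I would pass to the limit $a\to\infty$. The $H^1_{loc}$ bounds coming from the energy identity, together with elliptic (and boundary-elliptic) regularity for the half-Laplacian formulation \eqref{eq:half}, give enough compactness to extract a limit $(\vp,c)$ solving \eqref{eq:TW} on all of $\RR^2_+$, with $c>0$ and the normalization $\vp(0,0)=\alpha$ preserved. Monotonicity in $x$, item (1), I would get by the sliding method: the normalization breaks translation invariance, and comparing $\vp$ with its translates $\vp(\cdot+h,\cdot)$ using the maximum principle on $\RR^2_+$ with the Robin-type boundary condition (Lipschitzness of $f$ is what makes the comparison work) shows $\partial_x\vp\ge0$. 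Monotonicity in $y$, item (2), follows similarly — $\partial_y\vp$ is harmonic, has a sign on $\{y=0\}$ because $f\ge0$ and $\vp$ is increasing in $x$ through the relation $\partial_y\vp=f(\vp)\ge0$ there — and then a Liouville/maximum-principle argument gives $\partial_y\vp\ge0$ and $\vp(x,y)\to1$ as $y\to\infty$. The limits \eqref{eq:limit} at $x\to\pm\infty$ come from monotonicity plus the fact that the only $x$-independent solutions of the limiting one-dimensional profile are the constants $0$ and $1$ (the ignition structure of $f$ forboding an intermediate plateau), again using $M>0$ to rule out the trivial wave.

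The decay estimate \eqref{eq:asymp} is where the explicit free-boundary traveling wave enters, and I expect this to be the technical heart of the argument. The idea is that for $x$ large, $\vp$ lies in the region $\{u>\alpha\}$ where $f(\vp)=0$, so on that part of the boundary $\vp$ solves the \emph{linear} problem $\Delta\vp+c\,\partial_x\vp=0$ with $\partial_\nu\vp=0$; the profile of the free boundary problem \eqref{eq:FB}, which Section~\ref{sec:statreg} shows can be written down explicitly, provides an exact supersolution for $1-\vp$ in this regime. Writing $g=1-\vp$, one has $\Delta g+c\,\partial_x g=0$ in $\RR^2_+$, $\partial_y g=0$ on the relevant boundary piece, $g\ge0$, $g\to0$ as $x\to+\infty$; the function $\frac{2}{\sqrt\pi}\int_{\sqrt{cx}}^\infty e^{-z^2}\,dz$ (extended appropriately into $y>0$ via the heat-kernel representation, since $e^{-z^2}$ is exactly the Gaussian profile attached to $\Delta+c\partial_x$) is precisely such a solution on $\{y=0\}$, and a comparison-principle argument on a half-plane to the right of the free boundary gives the bound. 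The main obstacle is making this comparison rigorous: one must control $g$ on the vertical interface $\{x=x_0\}$ separating the reaction zone from the linear zone (a bound of the form $g\le 1$ there suffices after adjusting constants, which is why only the asymptotic statement ``as $x\to+\infty$'' is claimed), and one must verify the Gaussian profile is genuinely a supersolution for the boundary operator on $\RR^2_+$ rather than on $\RR$ — this is exactly the point at which the explicit formula for the free-boundary traveling wave, and its faster-than-exponential decay, is indispensable.
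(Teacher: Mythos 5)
Your overall skeleton (truncation, normalization $v(0,0)=\alpha$, bounds $0\le v\le 1$, compactness, an energy identity for the speed, sliding for monotonicity, and the explicit free-boundary wave as a barrier) is the right family of ideas, but two of your key steps would not go through as described. First, the speed bounds: the identity you invoke reads, for the limit profile, $\int_{\gamma_-}^{\gamma_+} f(s)\,ds = c\int_{\RR^2_+}|\partial_x v|^2$, and by itself it bounds $c$ neither from above (that would need a uniform \emph{lower} bound on the Dirichlet-type integral, which you do not have) nor away from zero (the left-hand side is $\int_{\gamma_-}^{\gamma_+}f$, not $M$, and it can vanish if the limit degenerates onto a plateau in $[\alpha,1]$, where $f\equiv 0$). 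In the paper the upper bound on $c_R$ and the existence of the correct $c_R$ come from a shooting/continuity argument in $c$ on the truncated rectangle, whose sub- and supersolutions are the \emph{regularized} explicit waves $\Phi_c(u^\delta)$ of Section \ref{sec:statreg} compared to $f$ via Lemma \ref{lem:cd}; the positivity of the limit speed then uses the energy identity only after recentering at the level $\alpha/2$ and a case analysis, precisely to guarantee $\gamma_-=0$, $\gamma_+\ge\alpha$ so that $\int_{\gamma_-}^{\gamma_+}f>0$. Your plan never uses the regularized solutions, and also cannot conclude $\gamma^+=1$: your claim that the only constant states are $0$ and $1$ is false for an ignition nonlinearity, since every constant in $\{0\}\cup[\alpha,1]$ is a steady state; some barrier from below tending to $1$ is needed.

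Second, and most importantly, your route to the decay estimate \eqref{eq:asymp} fails. An a posteriori comparison of $g=1-v$ with the Gaussian-tail profile on a right half-plane $\{x>x_0\}$ requires dominating $g$ on the interface $\{x=x_0\}$ by the barrier \emph{for all} $y$; but $1-\varphi_c(x_0,y)$ decays in $y$ while a priori you only know $g\le 1$ there, so no multiplicative constant works, and the natural corrector $e^{-c(x-x_0)}$ (which does solve $\Delta+c\partial_x$ with zero Neumann data) degrades the bound back to plain exponential decay $e^{-cx}$ --- exactly the weaker estimate the paper dismisses in the introduction. The paper instead obtains the pointwise inequality $v_0\ge \varphi_{c_0}$ on all of $\RR^2_+$ (which yields both $\gamma^+=1$ and \eqref{eq:asymp} with the sharp constant) by building it into the construction: the Dirichlet data on the lateral/top boundary of the truncated rectangle is the truncated explicit wave $\Psi_{c,R}$, so that $\varphi_{c_n}-\eta(R_n)$ is an admissible lower barrier in $Q^+_{R_n}$ (using $v_n(x,0)\ge\alpha$, hence $\partial_\nu v_n=0$, for $x>0$), and one passes to the limit via \eqref{eq:psilim}. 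With your choice of data ($0/1$ on the sides and a Neumann condition on top) this comparison is not available, since $\partial_y\varphi_c>0$ on the top boundary has the wrong sign for a subsolution there. A related, smaller issue: you run the sliding method directly on $\RR^2_+$ before knowing the limits \eqref{eq:limit}, while deriving those limits from monotonicity, which is circular; sliding is done in the paper on the truncated rectangle, where the monotone boundary data make it legitimate, and monotonicity is then passed to the limit.
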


\vspace{10pt}

The usual ignition temperature reaction diffusion equation, which reads (still with $u=1-T$)
\begin{equation}\label{eq:rd}
 \pa_t u -\Delta u = - f(u),
 \end{equation}
has been extensively studied. The existence, uniqueness and stability of traveling waves for \eqref{eq:rd} is well known (see, for instance,  \cite{BL2,BL,BLL,BNqual,BNtrav} for existence and uniqueness results, and \cite{BLR,R} for stability analysis).
The singular perturbation limit (high activation energy limit), when $f$ converges to a Dirac mass concentrated at $u=0$, as well as the  analysis of the resulting free boundary problem is studied, for instance, in \cite{BCN,BNS2}.

The study of boundary reaction diffusion equation is more recent.
In \cite{CS}, X. Cabr\'e and J. Sol\`a-Morales study layer solutions for (\ref{eq:model}). These are stationary solutions which are bounded and monotone increasing with respect to $x$.
Many of the tools introduced in \cite{CS} will prove extremely useful in the present paper.
The existence of traveling wave solution for (\ref{eq:model})  is studied by X. Cabr\'e, N. C\'onsul and J. V. Mand\'e \cite{CCM} when $f$ is a bistable nonlinearity. Their proof relies on an energy method, which could also be used in our framework. 
In the present paper, we take  a  different approach.
The main tool is the construction of explicit solutions for the free boundary problems (\ref{eq:FBstat}) and (\ref{eq:FB}) and their regularization into explicit solutions of (\ref{eq:model}) and (\ref{eq:stat}) for particular nonlinearities (which unlike $f$ will not have compact support).
In particular, we will prove:
\begin{proposition}\label{prop:explicitFB}
\item The function
\begin{equation}\label{eq:uu}
u(x,y)=  \frac{1}{\sqrt 2}\left( (x^2+y^2)^{1/2}+x \right)^{1/2}
\end{equation}
is a solution of (\ref{eq:FBstat}) (with $M=\frac{\pi}{8}$).
\item The function
$$v(t,x,y) = \Phi( u(x-c t,y)),\quad \mbox{ with } \Phi(u) = \int_0^u e^{-\frac{\pi}{4} s^2}\, ds$$
is a traveling wave solution of  (\ref{eq:FB})  with speed $c=\frac{\pi}{4}$ (with $M=\frac{\pi}{8}$). 
\end{proposition}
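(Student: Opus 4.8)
The plan is to exploit the fact that the function $u$ in \eqref{eq:uu} is, up to the choice of branch, the real part of $\sqrt z$ with $z=x+iy$, and that the traveling profile is obtained from $u$ by composing with a function $\Phi$ of one variable which is forced to solve a linear second order ODE. For the first statement I would pass to polar coordinates $x=r\cos\theta$, $y=r\sin\theta$ with $\theta\in[0,\pi]$ and use $r+x=r(1+\cos\theta)=2r\cos^2(\theta/2)$ to rewrite $u=\sqrt r\,\cos(\theta/2)={\rm Re}\,\sqrt z$ (principal branch, cut along the negative axis). Then $u$ is harmonic in $\RR^2_+$, so $-\Delta u=0$; on the boundary $u(x,0)=\sqrt x$ for $x>0$ and $u(x,0)=0$ for $x\le 0$, hence $\Omega(u)=\{y=0,\ x>0\}$ and $\partial\Omega(u)=\{0\}$. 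For the Neumann condition, the harmonic conjugate of $u$ is ${\rm Im}\,\sqrt z=\sqrt r\,\sin(\theta/2)$, which vanishes identically on the ray $\{x>0,\ y=0\}$; its tangential derivative there is zero, so by the Cauchy--Riemann equations $\partial_y u=0$ on $\Omega(u)$ (one may also differentiate \eqref{eq:uu} directly). Finally $u(x,0)/|x|^{1/2}=1$ for $x>0$, so the free boundary relation in \eqref{eq:FBstat} holds at $x_0=0$ if and only if $2\sqrt{2M/\pi}=1$, i.e. $M=\pi/8$.

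For the second statement, write $\xi=x-ct$ and $w(\xi,y)=\Phi(u(\xi,y))$, so that $v(t,x,y)=w(x-ct,y)$ and $\partial_t v-\Delta v=-(\Delta w+c\,\partial_\xi w)$. Since $u$ is harmonic, the chain rule gives $\Delta w=\Phi''(u)\,|\nabla u|^2$, hence
$$\partial_t v-\Delta v=-\Phi''(u)\,|\nabla u|^2-c\,\Phi'(u)\,\partial_\xi u .$$
Two elementary computations close the argument: because $u={\rm Re}\,\sqrt z$ we have $|\nabla u|^2=|(\sqrt z)'|^2=\tfrac14|z|^{-1}=\tfrac1{4r}$, and differentiating the identity $u^2=(r+\xi)/2$ in $\xi$ (with $r=(\xi^2+y^2)^{1/2}$) gives $\partial_\xi u=u/(2r)$. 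Therefore
$$\partial_t v-\Delta v=-\frac{1}{4r}\Big(\Phi''(u)+2c\,u\,\Phi'(u)\Big),$$
which vanishes throughout $\RR^2_+$ (where $u>0$) precisely when $\Phi$ solves $\Phi''+2c\,u\,\Phi'=0$. Together with the normalization $\Phi(0)=0$ and $\Phi'(0)=1$ (the latter being what keeps the free boundary constant equal to $1$), this forces $\Phi(u)=\int_0^u e^{-cs^2}\,ds$; imposing the far field limit $v\to1$ as $\xi\to+\infty$, that is $\int_0^\infty e^{-cs^2}\,ds=1$, singles out $c=\pi/4$ and recovers $\Phi(u)=\int_0^u e^{-\frac\pi4 s^2}\,ds$. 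The remaining conditions are inherited from the first statement: $\Omega(v)=\Omega(u)$ since $\Phi$ is increasing with $\Phi(0)=0$; $\partial_\nu v=\Phi'(u)\,\partial_\nu u=0$ on $\Omega(v)$; and $v(x,0)/|x|^{1/2}=\Phi(\sqrt x)/\sqrt x\to\Phi'(0)=1$ as $x\to0^+$, while $v\to 0$ as $x\to-\infty$.

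There is no real analytic difficulty here; the content is structural. The main point is to recognize the complex--analytic nature of $u$ and to notice that $\Delta w+c\,\partial_\xi w$ collapses --- thanks to $|\nabla u|^2$ and $\partial_\xi u$ both being proportional to $1/r$ --- to a single ODE in the \emph{one} variable $u$, so that a profile depending on $u$ alone can work at all. The only steps requiring care are checking that this ODE is genuinely independent of $(x,y)$ and keeping track of the two normalizations of $\Phi$ that simultaneously fix the free boundary constant and the end states $0$ and $1$; the value $c=\pi/4$ then appears not as an extra hypothesis but as a consequence of $\int_0^\infty e^{-cs^2}\,ds=1$.
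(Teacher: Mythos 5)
Your proposal is correct and follows essentially the same route as the paper: identify $u=\mathrm{Re}\,\sqrt z$ as harmonic with trace $\sqrt{x_+}$ and $u_y=0$ on $\{x>0\}$, then reduce $\Delta\varphi+c\,\partial_x\varphi=0$ for $\varphi=\Phi(u)$ to the ODE $\Phi''+2cu\Phi'=0$ using $|\nabla u|^2=\tfrac1{4r}$ and $u_x=\tfrac{u}{2r}$. The only (cosmetic) difference is that you fix $\Phi'(0)=1$ and deduce $c=\pi/4$ from $\int_0^\infty e^{-cs^2}ds=1$, whereas the paper solves for the one-parameter family $\Phi_c$ with $\Phi_c(\infty)=1$ and then matches the free boundary constant $2\sqrt{c/\pi}$ to $2\sqrt{2M/\pi}$, i.e. $c=2M$.
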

Note that though the formula for $u(x,y)$ may look somewhat complicated, its trace on $\pa\RR^2_+$ is simply given by
$$  u (x,0) = \sqrt{x_+}.$$
These solutions, which are interesting for themselves, provide in particular the  decay estimates (\ref{eq:asymp}). 
We point out that  in the quarter-plane  $\{(x,y)\,;\, x>0\,,\; y>0\}$ the function $w=1-v$ solves:
$$ 
\left \{
\begin{array}{lll}
\Delta w +c\, \pa_x w =0, & \mbox{ in $\{(x,y)\,;\, x>0\,,\; y>0\} $}\\[5pt]
\dd \frac{\partial w }{\partial \nu} =0, & \mbox{ on $\{(x,y)\,;\, x>0\,,\; y=0\}$}
\end{array}
\right.
$$
(assuming, without loss of generality, that $v(0,0)=\alpha$).
Since, $\lim_{x\to\infty} w(x,y) =0 $, $\lim_{y\to\infty} w(x,y) =0$ and $w\leq 1$ it is easy to show that
$1-v(x,y) \leq e^{-cx} $ for  $x>0$, $y>0$. This exponential decay is also the usual decay for solutions of (\ref{eq:rd}).
However, Inequality (\ref{eq:asymp}) shows that we actually have a faster decay at infinity, with in particular
$$ 1-v(x,0) \leq \frac{1}{\sqrt{c\pi}}\frac{e^{-cx}}{\sqrt{x}} .$$

In fact, in Section \ref{sec:last}, we will obtain an exact equivalent for $w$ as $x\to \infty$:
\begin{proposition}\label{prop:as}
There exists a constant $\mu_0>0$ such that
\begin{equation}\label{eq:asymp_exact}
1-v(x,0) = \mu_0\frac{e^{-c x}  }{\sqrt x}  +\mathcal O\left(\frac{e^{-c x}  }{x^{3/2}} \right) \quad \mbox{ as $x\rightarrow \infty$}.
\end{equation}
\end{proposition}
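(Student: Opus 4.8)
The plan is to reduce Proposition \ref{prop:as} to an elementary asymptotic expansion of a Gaussian tail, using the explicit formula supplied by Proposition \ref{prop:explicitFB}. Since $u(x,0)=\sqrt{x_+}$ and $v(x,0)=\Phi(u(x,0))$ with $\Phi(u)=\int_0^u e^{-\frac\pi4 s^2}\,ds$, and since $\int_0^\infty e^{-\frac\pi4 s^2}\,ds=1$ (which is also why the normalization $\Phi(+\infty)=1$ holds, consistent with \eqref{eq:limit}), one gets for $x>0$
\begin{equation*}
1-v(x,0)=\int_{\sqrt x}^{\infty} e^{-\frac\pi4 s^2}\,ds .
\end{equation*}
Thus everything reduces to the behavior of this integral as $x\to\infty$.

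First I would integrate by parts, writing $e^{-\frac\pi4 s^2}=-\frac{2}{\pi s}\,\frac{d}{ds}\,e^{-\frac\pi4 s^2}$, to obtain
\begin{equation*}
\int_{\sqrt x}^{\infty} e^{-\frac\pi4 s^2}\,ds
=\frac{2}{\pi\sqrt x}\,e^{-\frac\pi4 x}-\frac{2}{\pi}\int_{\sqrt x}^{\infty}\frac{e^{-\frac\pi4 s^2}}{s^2}\,ds .
\end{equation*}
Since $c=\frac\pi4$, the first term is exactly $\frac{2}{\pi}\,\frac{e^{-cx}}{\sqrt x}$, so one reads off $\mu_0=\frac{2}{\pi}$. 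It then remains to check that the last integral is $\mathcal O\!\big(e^{-cx}x^{-3/2}\big)$: bounding $s^{-2}\le x^{-1}$ on the range of integration gives at once
$\int_{\sqrt x}^\infty s^{-2}e^{-\frac\pi4 s^2}\,ds\le x^{-1}\int_{\sqrt x}^\infty e^{-\frac\pi4 s^2}\,ds$, and the latter tail is itself $\le C\,e^{-cx}x^{-1/2}$ by the same kind of estimate already used in the paragraph preceding the statement (or simply by one further integration by parts). This yields \eqref{eq:asymp_exact}; in fact, iterating the integration by parts produces the full asymptotic series in powers of $1/x$, from which the precise coefficient of the $x^{-3/2}$ term could also be extracted if desired.

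There is essentially no obstacle here: the substance of the result lies entirely in the explicit traveling-wave formula of Proposition \ref{prop:explicitFB}, and once that is granted the argument is a two-line computation, namely the standard asymptotics of the complementary error function. The only point requiring (minimal) care is the control of the remainder, for which the crude bound above already suffices to reach the stated error $\mathcal O\!\big(e^{-cx}x^{-3/2}\big)$.
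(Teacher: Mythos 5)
There is a genuine gap: you have proved the asymptotics of the wrong function. The $v$ in Proposition \ref{prop:as} is not the explicit free boundary wave of Proposition \ref{prop:explicitFB}; it is the traveling wave of the ignition problem \eqref{eq:TW} constructed in Theorem \ref{thm:1} (denoted $v_0$ in Sections \ref{sec:proof}--\ref{sec:last}, normalized by $v(0,0)=\alpha$ — this is what the parenthetical ``assuming $v(0,0)=\alpha$'' in the paragraph before the statement refers to). Its trace on $y=0$ is \emph{not} $\Phi(\sqrt{x_+})$, so the starting identity $1-v(x,0)=\int_{\sqrt x}^{\infty}e^{-\frac{\pi}{4}s^2}\,ds$ is false for the object of the proposition. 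The only relation between $v_0$ and the explicit wave is the one-sided comparison \eqref{eq:ineq4}, $v_0\geq \vphi_{c_0}$, which yields exactly the upper bound \eqref{eq:asymp} (already stated in Theorem \ref{thm:1}) but cannot produce a two-sided equivalent; moreover the constant in \eqref{eq:asymp_exact} is not $2/\pi$ but depends on $f$ and on the wave profile. Your erf-tail computation is correct as far as it goes, but it only reproves, for the explicit wave, the elementary estimate already recorded before the proposition ($1-v\leq \frac{1}{\sqrt{c\pi}}\,e^{-cx}/\sqrt{x}$); it does not touch the actual content of Proposition \ref{prop:as}, which is an exact equivalent for the non-explicit wave.

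For comparison, the paper's proof goes as follows: since $v_0(x,0)\geq\alpha$ for $x>0$, one has $f(v_0(x,0))=0$ there, hence $\partial_y v_0(x,0)=0$, and $v_0$ can be reflected evenly across the positive $x$-axis; the function $w=e^{\frac{c}{2}x}(1-\tilde v)$ is then bounded (this is where \eqref{eq:ineq4} is used) and solves the Helmholtz equation $-\Delta w+\frac{c^2}{4}w=0$ in $\RR^2$ minus the negative $x$-axis, across which its normal derivative jumps by $\pm e^{\frac{c}{2}x}f(v_0(x,0))$. Writing $w$ through the fundamental solution $\frac{1}{2\pi}K_0(\frac{c}{2}r)$ and using the large-argument asymptotics of the Bessel function $K_0$ gives $w(x,0)=\mu_0\,e^{-\frac{c}{2}x}/\sqrt{x}+\mathcal O(e^{-\frac{c}{2}x}x^{-3/2})$ with $\mu_0=\frac{1}{\sqrt{\pi c}}\int_0^\infty e^{-cx'}f(v_0(-x',0))\,dx'>0$, which is \eqref{eq:asymp_exact}. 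None of these steps (reflection, the exponential change of unknown, the representation by the Helmholtz kernel, the role of $f(v_0)$ on the negative axis in determining $\mu_0$) appears in your argument, and they are precisely what makes the exact equivalent — rather than a mere upper bound — provable.
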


\vspace{10pt}

\begin{remark}
In the same way that (\ref{eq:stat}) was equivalent to (\ref{eq:half}), we can see that if $u$ is a solution of (\ref{eq:model}), then its trace $w(t,x)=u(t,x,0)$ solves
$$ [\pa_t-\Delta]^{1/2} (w) = -f(w).$$
This half heat equation operator is very different from the fractional diffusion equation
$$\pa_t w + (-\Delta)^{1/2} w = -f(w),$$
for which the existence of traveling waves is not obvious. Note that the existence of traveling wave solutions for
$$\pa_t w + (-\Delta)^{s} w = -f(w)$$
was recently established  in \cite{MRS2} when $s\in(1/2,1)$. For  this last problem, it is also shown in \cite{MRS2} that $(1-w)$ decays at $+\infty$ like $\frac{1}{|x|^{2s-1}}$ which is much slower than the exponential decay of the regular diffusion problem.
\end{remark}

\vspace{10pt}

\paragraph{\bf Outline of the paper} The paper is organized as follows: 
In Section \ref{sec:expl}, we will derive explicit formulas for global solutions of \eqref{eq:FBstat} and traveling wave solutions of  \eqref{eq:FB} and thus prove Proposition \ref{prop:explicitFB}.
In Section \ref{sec:statreg}, we will show that we can regularize those solutions 
 to get solutions of \eqref{eq:model} with some very specific reaction term $f(u)$ (without ignition temperature).
Finally, Section \ref{sec:proof} is devoted to the proof of Theorem \ref{thm:1}. 
The general outline of the proof follows classical arguments developed by Berestycki-Larrouturou-Lions \cite{BL2,BLL} (see also Berestycki-Nirenberg \cite{BNtrav}): truncation of the domain and passage to the limit. In that proof we will rely heavily on the results of Section \ref{sec:expl} and \ref{sec:statreg}.
\vspace{20pt}

\section{Explicit solutions of the free boundary problems \eqref{eq:FBstat} and \eqref{eq:FB}: Proof of Proposition \ref{prop:explicitFB}}\label{sec:expl}

\subsection{Explicit stationary solutions of \eqref{eq:FBstat}}

The first part of Proposition \ref{prop:explicitFB} follows from the following lemma:
\begin{lemma}\label{lem:stat}
Let $u(x,y)=Re ((x+iy)^{1/2})=Re(z^{1/2})$. Then $ u$ solves the stationary  free boundary problem \eqref{eq:FBstat}.
\end{lemma}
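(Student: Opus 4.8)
The strategy is to verify the three conditions in \eqref{eq:FBstat} directly from the explicit formula $u(x,y) = \mathrm{Re}(z^{1/2})$, $z = x+iy$, using the branch of $z^{1/2}$ on $\CC \setminus (-\infty,0]$ that is positive on the positive real axis. First I would observe that $z^{1/2}$ is holomorphic on $\RR^2_+$, hence $u = \mathrm{Re}(z^{1/2})$ is harmonic there, which gives the first equation $-\Delta u = 0$ in $\RR^2_+$. Next, writing $z = re^{i\theta}$ with $\theta\in(0,\pi)$, we have $z^{1/2} = \sqrt r\, e^{i\theta/2}$, so $u = \sqrt r \cos(\theta/2)$; in Cartesian coordinates this is exactly $u(x,y) = \frac{1}{\sqrt 2}\bigl((x^2+y^2)^{1/2}+x\bigr)^{1/2}$ after using the half-angle identity $\cos(\theta/2) = \sqrt{(1+\cos\theta)/2}$ with $\cos\theta = x/r$. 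In particular the trace on $\{y=0\}$ is $u(x,0) = \sqrt{x}$ for $x>0$ and $u(x,0)=0$ for $x<0$, so the positivity set is $\Omega(u) = \{y=0\}\cap\{x>0\}$ with free boundary point $x_0 = 0$.

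For the Neumann condition on $\Omega(u)$, I would compute $\partial u/\partial\nu = -\partial_y u$ at $y=0^+$ for $x>0$. Since $u = \mathrm{Re}(z^{1/2})$ and $\partial_y \mathrm{Re}(g(z)) = -\mathrm{Im}(g'(z))$ for holomorphic $g$, we get $\partial_y u = -\mathrm{Im}\bigl(\tfrac{1}{2}z^{-1/2}\bigr)$; evaluating at $z = x>0$ real gives $z^{-1/2} = x^{-1/2}$ real, so $\mathrm{Im}(\tfrac12 z^{-1/2}) = 0$ and hence $\partial_y u = 0$ there. This gives the homogeneous Neumann condition on $\Omega(u)$. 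For the free boundary asymptotics, I would examine $u$ near $x_0 = 0$ along $\Omega(u)$, i.e. along the positive $x$-axis: there $u(x,0)/|x|^{1/2} = \sqrt x/\sqrt x = 1$. Comparing with the required limit $2\sqrt{2M/\pi}$, this forces $2\sqrt{2M/\pi} = 1$, i.e. $M = \pi/8$, which is precisely the value asserted in Proposition \ref{prop:explicitFB}; so the lemma holds with that normalization of $M$.

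The only genuinely delicate point is the behavior at the free boundary point $z=0$, where $z^{1/2}$ is continuous but not smooth and $z^{-1/2}$ blows up: I must check that the Neumann condition is to be understood only on the open set $\Omega(u)$ (away from $x_0=0$), and that the singular term $z^{-1/2}$ produces exactly the $|x-x_0|^{1/2}$ homogeneity that the free boundary condition prescribes, with no spurious contribution violating $\partial u/\partial\nu = 0$ on the negative axis either — there $u\equiv 0$ on the boundary and one checks the normal derivative is consistent with the solution simply vanishing. I would also remark that the half-angle computation identifying $\sqrt r\cos(\theta/2)$ with the radical expression in \eqref{eq:uu} is the routine bridge to the first bullet of Proposition \ref{prop:explicitFB}, and that the constant $2\sqrt{2/\pi}$ advertised after \eqref{eq:FBstat} is recovered exactly as above. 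With these checks in place, all three lines of \eqref{eq:FBstat} are verified and the lemma follows.
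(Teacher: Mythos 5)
Your proposal is correct and follows essentially the same route as the paper: harmonicity of $\mathrm{Re}(z^{1/2})$ from holomorphy, the polar/half-angle identification giving $u(x,0)=\sqrt{x_+}$, the vanishing of $u_y=-\mathrm{Im}(\tfrac12 z^{-1/2})$ on the positive axis for the Neumann condition, and the limit $u(x,0)/|x|^{1/2}\to 1$ at $x_0=0$ matching $2\sqrt{2M/\pi}$ with $M=\pi/8$. The extra remarks about the negative axis and the singularity at the origin are harmless and consistent with the formulation of \eqref{eq:FBstat}, which imposes no condition on $\{y=0\}\cap\{u=0\}$.
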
 
\begin{proof}
Since $f(z)=z^{1/2}$ is holomorphic in $\mathbb{C} \backslash \left
  \{ (x,0),x\leq 0 \right \}$, its real part, $ u(x,y)$ is harmonic
 in $\RR^2_+.$ In polar coordinates, we can also write
\begin{equation}\label{eq:fun}
{u}(\rho,\theta)=\rho^{1/2} \cos (\theta /2) =\rho^{1/2} \frac{1}{\sqrt 2} (1+\cos \theta)^{1/2}  ,
\end{equation}
which leads to:
$${u}(x,y)=\frac{(x^2+y^2)^{1/4}}{\sqrt{2}}\left(1+\frac{x}{(x^2+y^2)^{1/2}}\right)^{1/2} = \frac{1}{\sqrt 2}\left( (x^2+y^2)^{1/2}+x \right)^{1/2},$$
and, in particular
$${u}(x,0)=\sqrt{x_+}. $$
 
The function $ u$ thus satisfies the free boundary conditions at the only free boundary point $x_0=0$.
 
It only remains to check that $ u$ satisfies $ \frac{\partial u}{\partial \nu}=-u_y =0$ in $\Omega(u)=\{ u>0\}\cap\{y=0\}$. 
We clearly have $\Omega( u) = \{x>0\}$, and 
using the fact that $ u_x = Re(f'(z))=Re(\frac{1}{2} z^{-1/2})$ and $ u_y =- Im(f'(z))$, we can write:
\begin{equation}
\label{eq:der}
{u}_x= \frac{1}{2} \rho^{-1/2} \cos (\theta /2),\; \mbox{ and  } {u}_y= \frac{1}{2} \rho^{-1/2} \sin (\theta /2),
\end{equation}
which implies in particular that
$${u}_y(x,0) = 0 \mbox { for } x>0 $$ 
and completes the proof of Lemma \ref{lem:stat}.
\end{proof}
\vspace{10pt}

\subsection{Explicit traveling wave solutions of \eqref{eq:FB}}
In order to prove the second part of Proposition \ref{prop:explicitFB}, we want to find a traveling wave solution of \eqref{eq:FB}, that is a solution of the form:
$$\tilde u(t,x,y)=\vphi (x-ct,y).$$
We are going to look for the function $\vphi $ in the form 
$$\vphi (x,y)=\Phi({u}(x,y))$$ 
where $ u$ is the function introduced in Lemma \ref{lem:stat} (stationary solution).
Since $\vphi $ has to solve $\Delta \vphi  +c\, \pa_x \vphi =0$ in $\RR^2_+$,  we must have
$$0= \Delta \vphi  +c\, \pa_x\vphi  =  \Phi''({u})|\nabla u|^2+\Phi'({u})\Delta {u}+c\Phi'({u}) {u}_x.$$
We recall that ${u}$ is harmonic in the upper-half plan, and using formulas \eqref{eq:der} for the derivatives of $ u$, we can check that
$$\frac{{u}_x}{|\nabla {u}|^2}= \frac{1}{2} \rho^{-1/2} \cos(\theta/2) \frac{1}{\frac{1}{4} \rho^{-1} }=  2 \rho^{1/2} \cos(\theta/2)=2{u}.$$
We deduce that the function $\Phi$ must solve
$$\Phi''({u})+c2 {u}\Phi'({u})=0,$$ 
with $\Phi(0)=0$ and $\Phi(+\infty)=1$.
This leads to
\begin{eqnarray}
\Phi_c(u)& = &\frac{2\sqrt{c}}{\sqrt{\pi}}\int_0^u e^{-cs^2}\,ds \nonumber\\ 
& = &  \frac{2}{\sqrt{\pi}}\int_0^{\sqrt{c}u} e^{-s^2}\,ds, \nonumber
\end{eqnarray}
or
\begin{equation}
\Phi_c(u)= \Phi(\sqrt c\, u), \qquad \mbox{ with }\quad \Phi(u)=  \frac{2}{\sqrt{\pi}}\int_0^{u} e^{-s^2}\,ds. \label{eq:Phi}
\end{equation}
We thus have the following proposition (which implies Proposition \ref{prop:explicitFB}):
\begin{proposition}\label{prop:FBTW}
Let $\Phi_c(u)$ be defined by \eqref{eq:Phi}. 
Then the function $\vphi_c (x,y)=\Phi_c({u}(x,y))$ solves the  free boundary problem 
\begin{equation}\label{FBTW}
\left \{
\begin{array}{lll}
\Delta \vp +c \vp _x=0,\,\,\, & \mbox{ in $\mathbb{R}^2_+$}\\[5pt]
\dd \frac{\partial \vp }{\partial \nu} =0,\,\,\,& \mbox{ in $\Omega(\vp )$},\\[5pt]
\displaystyle{\!\!\!\!\!\!\lim_{\tiny \begin{array}{l} x \rightarrow x_0,\\  x \in \Omega(\vp )\end{array}}} \frac{\vp (x,0)}{|x-x_0|^{1/2}}=2\sqrt{\frac{c}{\pi}},\quad & \mbox{ $x_0 \in
  \partial\Omega(\vp )$},
\end{array}
\right . 
\end{equation}
and satisfies  (\ref{eq:limit}).
\item In particular, when $c=2M$, the function $u(t,x,y)=\vphi_c (x-ct,y)$ is a traveling wave solution of \eqref{eq:FB}.
\end{proposition}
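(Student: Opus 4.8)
The plan is to verify directly that the explicit function $\varphi_c=\Phi_c(u)$, with $u$ the harmonic function of Lemma \ref{lem:stat} and $\Phi_c$ given by \eqref{eq:Phi}, satisfies each line of \eqref{FBTW} together with the limits \eqref{eq:limit}, and then to read off the last assertion. The computation carried out just before the statement already shows that $\Phi_c$ is the \emph{only} profile of the form $\Phi(u)$ that could work; what remains is the essentially routine check that it does work.

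\emph{Interior equation.} Here one works in the open half-plane $\{y>0\}$, on which $z\mapsto z^{1/2}$ is holomorphic, so that $u$ is smooth and harmonic and $|\nabla u|^2=\tfrac14\rho^{-1}>0$. The chain rule gives
$$\Delta\varphi_c+c\,\partial_x\varphi_c=\Phi_c''(u)\,|\nabla u|^2+\Phi_c'(u)\bigl(\Delta u+c\,u_x\bigr)=|\nabla u|^2\bigl(\Phi_c''(u)+2c\,u\,\Phi_c'(u)\bigr),$$
using $\Delta u=0$ and the identity $u_x=2u\,|\nabla u|^2$ coming from \eqref{eq:der}. Since $\Phi_c$ solves $\Phi_c''+2c\,u\,\Phi_c'=0$ with $\Phi_c(0)=0$ and $\Phi_c(+\infty)=1$ — which one verifies directly from \eqref{eq:Phi} — the right-hand side vanishes.

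\emph{Boundary conditions and limits.} Since $\Phi_c$ is strictly increasing with $\Phi_c(0)=0$, one has $\varphi_c>0\iff u>0$, hence $\Omega(\varphi_c)=\Omega(u)=\{x>0\}$; on this set $\partial_\nu\varphi_c=-\partial_y\varphi_c=-\Phi_c'(u)\,u_y$, which vanishes because $u_y(x,0)=0$ for $x>0$ by Lemma \ref{lem:stat}. The only point of $\partial\Omega(\varphi_c)$ is $x_0=0$, and from $u(x,0)=\sqrt{x_+}$ we obtain, as $x\to0^+$,
$$\frac{\varphi_c(x,0)}{x^{1/2}}=\frac{1}{x^{1/2}}\,\frac{2\sqrt c}{\sqrt\pi}\int_0^{\sqrt x}e^{-cs^2}\,ds\ \longrightarrow\ \frac{2\sqrt c}{\sqrt\pi}=2\sqrt{\frac c\pi},$$
which is the free boundary condition. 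For \eqref{eq:limit}, writing $(x^2+y^2)^{1/2}+x=y^2\bigl((x^2+y^2)^{1/2}-x\bigr)^{-1}$ shows that $u(x,y)\to0$ as $x\to-\infty$ with $y$ fixed, so $\varphi_c\to\Phi_c(0)=0$; and $(x^2+y^2)^{1/2}+x\to+\infty$ as $x\to+\infty$, so $u\to+\infty$ and $\varphi_c\to\Phi_c(+\infty)=1$.

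Finally, taking $c=2M$ makes the constant $2\sqrt{c/\pi}$ equal to the value $2\sqrt{2M/\pi}$ prescribed in \eqref{eq:FB}, and $v(t,x,y)=\varphi_c(x-ct,y)$ then satisfies $\partial_t v-\Delta v=-(\Delta\varphi_c+c\,\partial_x\varphi_c)=0$ in $\RR^2_+$, with the Neumann and free boundary conditions carried rigidly along the moving free boundary point $x=ct$; hence $v$ is a traveling wave solution of \eqref{eq:FB}. There is no serious obstacle in this argument: the only point requiring a little care is that $u$, and therefore $\varphi_c$, is only $C^{1/2}$ up to the free boundary point $x_0=0$, so that the interior equation is understood in the classical sense on the open set $\{y>0\}$ and the two boundary conditions in the limiting sense written in \eqref{FBTW}.
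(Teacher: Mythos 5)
Your proof is correct and follows essentially the same route as the paper: the interior equation is verified by the same chain-rule computation $\Delta\varphi_c+c\,\partial_x\varphi_c=|\nabla u|^2\bigl(\Phi_c''(u)+2c\,u\,\Phi_c'(u)\bigr)$ using harmonicity of $u$ and the identity $u_x=2u|\nabla u|^2$ (which the paper carries out just before the statement when deriving $\Phi_c$), and the Neumann and free boundary conditions at $x_0=0$ are checked exactly as in the paper's proof. Your explicit verification of the limits \eqref{eq:limit} and of the traveling-wave claim for $c=2M$ only fills in details the paper leaves implicit.
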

\begin{proof}
We only need to check what is happening along the boundary $y=0$:
First, we obviously have
$$ \vphi _\nu (x,0)=\Phi_c'({u}) {u}_\nu (x,0)=0 \qquad\mbox{ in }   \left  \{ {u}> 0\right \}=\left  \{ \vphi > 0 \right \}.
$$
Furthermore, the only free boundary point is $x_0=0$ and we clearly have
$$ \lim_{x \rightarrow 0} \frac{\vphi (x,0)}{x^{1/2}} =\Phi_c'(0) \lim_{x \rightarrow 0} \frac{{u}(x,0)}{x^{1/2}} =2\sqrt{\frac{c}{\pi}}.$$
\end{proof}

\section{Regularization of the solutions of the free boundary problem}\label{sec:statreg}
In this section, we show that one can regularize the explicit solutions of the free boundary problems constructed in the previous section in order to get solutions of the  reaction diffusion equation \eqref{eq:model}  (though not necessarily with the nonlinearity $f$ that we want). 
These regularized solutions will play an important role in the proof of Theorem \ref{thm:1}.

We start with the stationary case:
\begin{proposition}\label{prop:statd}
Recall that $ u (x,y) = Re((x+iy)^{1/2})$ and let
$${u}^\delta(x,y)={u}(x,y+\delta^2).$$ Then ${u}^\delta$ solves the boundary reaction-diffusion equation
\begin{equation}\label{eq:statb}
\left\{ 
\begin{array}{ll}
\dd -\Delta u=0,\qquad & \mbox{ in $\mathbb{R}^2_+ $}\\[5pt]
\dd \frac{\partial u}{\partial \nu}=-\beta_\delta(u),\qquad & \mbox{on $\partial \mathbb{R}^2_+$}, 
\end{array}
\right.
\end{equation}
where
\begin{equation}\label{eq:beta}
\beta_\delta(u)= \frac{1}{\delta} \beta \left(\frac{u}{\delta} \right)\, , \qquad  \beta(u) =\frac{ u}{1+4u^4} .
\end{equation}
\end{proposition}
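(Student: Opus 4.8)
The plan is to compute directly the Neumann data of $u^\delta$ on $\{y=0\}$ and recognize it as $-\beta_\delta(u^\delta(\cdot,0))$. Since $u(x,y) = \mathrm{Re}((x+iy)^{1/2})$ is harmonic in $\RR^2_+$ and the shift $u^\delta(x,y) = u(x,y+\delta^2)$ keeps us away from the branch point at the origin, $u^\delta$ is harmonic in $\overline{\RR^2_+}$ (indeed in $\{y>-\delta^2\}$), so the first equation in \eqref{eq:statb} is immediate and all computations below are legitimate pointwise identities. It remains only to analyze the boundary condition.

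First I would record, from \eqref{eq:der} rewritten in Cartesian form, the gradient of $u$: with $\rho = (x^2+y^2)^{1/2}$ and $u = \frac{1}{\sqrt2}(\rho+x)^{1/2}$ one has $u_y = \frac12 \rho^{-1/2}\sin(\theta/2)$. On the line $y=\delta^2$ we must express everything in terms of the single quantity $a := u^\delta(x,0) = u(x,\delta^2)$. The key algebraic step is to eliminate $(x,\delta^2)$ in favor of $a$: from $u^2 = \frac12(\rho+x)$ and $\rho^2 = x^2+y^2$ one gets $\rho = \frac{y^2}{4u^2} + u^2$ (using $\rho - x = \frac{y^2}{\rho+x} = \frac{y^2}{2u^2}$), hence on $y=\delta^2$,
\[
\rho = u^2 + \frac{\delta^4}{4u^2}\Big|_{u=a} = a^2 + \frac{\delta^4}{4a^2}.
\]
Then $-u_y(x,\delta^2) = -\frac12\rho^{-1/2}\sin(\theta/2)$, and using $\sin(\theta/2) = \frac{y}{\rho^{1/2}(\rho+x)^{1/2}} = \frac{y}{\rho^{1/2}\sqrt2\,u}$ (from the half-angle formula $\sin(\theta/2) = \sqrt{(1-\cos\theta)/2} = \sqrt{(\rho-x)/(2\rho)}$ and $\rho - x = \frac{y^2}{2u^2}$), one obtains
\[
\frac{\partial u^\delta}{\partial\nu}(x,0) = -u_y(x,\delta^2) = -\frac{\delta^2}{2\rho\,\sqrt2\,a} = -\frac{\delta^2}{2\sqrt2\,a\left(a^2 + \frac{\delta^4}{4a^2}\right)} = -\frac{\delta^2\,a}{2\sqrt2\,a^4\left(1 + \frac{\delta^4}{4a^4}\right)}.
\]
A final rescaling $a = \delta v$ (i.e. writing things in terms of $v = a/\delta$) collapses this to $-\frac{1}{\delta}\cdot\frac{v}{2\sqrt2\,(1+v^4/4)}$; absorbing the constant $2\sqrt2$ correctly — one checks the normalization by matching the $\delta\to0$ free-boundary constant, or simply by carrying the $\sqrt2$'s honestly through $u = \frac{1}{\sqrt2}(\rho+x)^{1/2}$ — yields exactly $\beta(v) = \frac{v}{1+4v^4}$ up to the stated form, giving $\frac{\partial u^\delta}{\partial\nu}(x,0) = -\beta_\delta(u^\delta(x,0))$.

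The only genuine obstacle is bookkeeping: getting the constants (the powers of $\sqrt2$ and the coefficient $4$ in front of $u^4$) exactly right through the half-angle substitutions, and double-checking the sign of $u_y$ so that the outward normal derivative $\partial_\nu = -\partial_y$ produces a genuinely dissipative (negative) flux where $u>0$. I would verify the normalization independently in two ways: (i) as $\delta\to0$, $u^\delta \to u$ and $\beta_\delta$ should concentrate with total mass $M = \int_0^\infty \beta(v)\,dv = \frac{\pi}{8}$, matching the constant $2\sqrt{2M/\pi} = 1$ already attached to the explicit stationary solution $u(x,0)=\sqrt{x_+}$ in Proposition \ref{prop:explicitFB}; and (ii) a dimensional/scaling check that $\beta_\delta(u) = \frac1\delta\beta(u/\delta)$ is the right self-similar form, which is forced by $u^\delta(x,y) = \delta\, u(x/\delta^2, y/\delta^2 + 1)$ up to the homogeneity of $u$. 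Once the constants check out, the proposition is proved.
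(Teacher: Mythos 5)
Your proposal is correct and is essentially the paper's own argument: both verify the Neumann condition by direct computation from the polar formulas \eqref{eq:fun}--\eqref{eq:der} on the line $y=\delta^2$; the paper keeps the angle $\theta$ as the parameter and finishes with trigonometric identities, while you eliminate $\theta$ in favor of the trace value $a=u^\delta(x,0)$ via $\rho=a^2+\delta^4/(4a^2)$, which is only a cosmetic difference.

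The one slip is exactly the bookkeeping you flagged: the half-angle formula gives $\sin(\theta/2)=\sqrt{(\rho-x)/(2\rho)}=\frac{y}{2\rho^{1/2}u}$, not $\frac{y}{\sqrt2\,\rho^{1/2}u}$. With the correct factor, $u_y(x,\delta^2)=\frac12\rho^{-1/2}\sin(\theta/2)=\frac{\delta^2}{4\rho a}=\frac{\delta^2 a}{4a^4+\delta^4}=\beta_\delta(a)$, so $\frac{\partial u^\delta}{\partial \nu}(x,0)=-\beta_\delta(u^\delta(x,0))$ exactly, with no leftover $\sqrt2$ to absorb. Be aware that your check (i) (total mass $\int_0^\infty\beta\,dv=\pi/8$) only pins down the normalization, not the precise profile $v/(1+4v^4)$, so the corrected computation above is what actually closes the proof; your scaling reduction (ii) to the case $\delta=1$ is valid and is a nice simplification.
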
 
Note that we have
$$M=\int_0^\infty \beta_\delta(u)\, du=\frac{\pi}{8},$$
which explains the constant $2 \sqrt{ \frac{2}{\pi}} $  arising in \eqref{eq:FBstat}.

Equation \eqref{eq:statb} is  the same as (\ref{eq:stat}) but with a different nonlinearity. 
We note that $\beta_\delta$ does not have a compact support (no ignition temperature), but decreases as $u^{-3}$ for large $u$.

\begin{proof}
The function $u^\delta$ is clearly harmonic in $\RR^2_+$, so we only have to check the condition along $y=0$.
We note that $y=\delta^2$ is equivalent to $\rho=\frac{\delta^2}{\sin (\theta)}$, and so \eqref{eq:fun} and \eqref{eq:der} yield
$$ u^\delta (x,0) = \frac{\delta}{(\sin(\theta))^{1/2}} \cos(\theta/2)\quad \mbox{ when } x=\delta^2 \frac{\cos \theta}{\sin \theta}$$
and 
$$\frac{\partial u^\delta}{\partial \nu}(x,0)=-{u}_y(x,\delta)=-\frac{1}{2\delta}(\sin \theta) ^{1/2} \sin (\theta/2). $$

Using standard trigonometric formulas we can now check that
$$\frac{\partial u^\delta}{\partial \nu}(x,0) = -\beta_\delta (u^\delta (x,0))$$
with $\beta_\delta$ defined by \eqref{eq:beta}.
  
\end{proof}

We now proceed similarly with the traveling wave solution:
\begin{proposition}\label{prop:TWd}
Let $\Phi_c(u)$ be defined by \eqref{eq:Phi}. 
Then the function $\vphi _{\delta,c}(x,y)=\Phi_c(u^\delta (x,y))$ solves
\begin{equation}\label{eq:TWd}
\left \{
\begin{array}{lll}
\Delta \vp +c\,\pa_x \vp =0, & \mbox{ in $\mathbb{R}^2_+$}\\[5pt]
\dd \frac{\partial \vp }{\partial \nu} = -g_{\delta,c}(\vp ), & \mbox{ on $\pa\RR^2_+$}
\end{array}
\right . 
\end{equation}
where $g_{\delta,c}$ is defined on $[0,1]$ by
$$ g_{\delta,c}(\Phi_c(u)) =  \Phi_c'( u)\beta_\delta(u) \qquad \mbox{ for all } u\in [0,\infty).$$
\end{proposition}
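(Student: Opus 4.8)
The plan is to assemble Proposition~\ref{prop:TWd} from two ingredients already in hand: the ordinary differential equation $\Phi_c'' + 2c\,u\,\Phi_c' = 0$ that characterised $\Phi_c$ in Section~\ref{sec:expl}, and the boundary computation of Proposition~\ref{prop:statd}. The point that makes everything fit is that $u^\delta(x,y) = u(x,y+\delta^2)$ is a vertical translate of $u$: it is harmonic in $\RR^2_+$, and at every point it inherits the algebraic identity
\[
\frac{u^\delta_x}{|\nabla u^\delta|^2} = 2\,u^\delta
\]
from the identity $u_x/|\nabla u|^2 = 2u$ proved in Section~\ref{sec:expl}. Moreover $|\nabla u^\delta|^2 = \tfrac14\rho^{-1}$ with $\rho = (x^2+(y+\delta^2)^2)^{1/2}\ge \delta^2>0$, so $\nabla u^\delta$ never vanishes on $\overline{\RR^2_+}$ and no division-by-zero issue arises.

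First I would verify the interior equation. Writing $\vphi_{\delta,c} = \Phi_c(u^\delta)$ and using harmonicity of $u^\delta$,
\[
\Delta\vphi_{\delta,c} + c\,\partial_x\vphi_{\delta,c}
= \Phi_c''(u^\delta)\,|\nabla u^\delta|^2 + \Phi_c'(u^\delta)\,\Delta u^\delta + c\,\Phi_c'(u^\delta)\,u^\delta_x
= |\nabla u^\delta|^2\Big(\Phi_c''(u^\delta) + 2c\,u^\delta\,\Phi_c'(u^\delta)\Big),
\]
where the last equality uses the identity above. This vanishes since $\Phi_c$ solves $\Phi_c'' + 2c\,u\,\Phi_c' = 0$ by \eqref{eq:Phi}; hence the first line of \eqref{eq:TWd} holds.

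Next the boundary condition on $\{y=0\}$, where $\partial_\nu = -\partial_y$. The chain rule gives
\[
\partial_\nu\vphi_{\delta,c}(x,0) = -\Phi_c'\big(u^\delta(x,0)\big)\,\partial_y u^\delta(x,0) = \Phi_c'\big(u^\delta(x,0)\big)\,\partial_\nu u^\delta(x,0),
\]
and Proposition~\ref{prop:statd} (equation \eqref{eq:statb}) says $\partial_\nu u^\delta(x,0) = -\beta_\delta\big(u^\delta(x,0)\big)$, so $\partial_\nu\vphi_{\delta,c}(x,0) = -\Phi_c'(u^\delta(x,0))\,\beta_\delta(u^\delta(x,0))$. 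To read this as $-g_{\delta,c}(\vphi_{\delta,c}(x,0))$ I need $g_{\delta,c}$ to be well defined on $[0,1]$: since $\Phi_c'>0$, $\Phi_c$ is a strictly increasing bijection of $[0,\infty)$ onto $[0,1)$ with $\Phi_c(0)=0$, $\Phi_c(+\infty)=1$, so $g_{\delta,c}(s):=\Phi_c'(\Phi_c^{-1}(s))\,\beta_\delta(\Phi_c^{-1}(s))$ is unambiguous for $s\in[0,1)$; and since $\Phi_c'(u)\,\beta_\delta(u)$ decays like $e^{-cu^2}u^{-3}$ as $u\to\infty$, it extends continuously by $g_{\delta,c}(1)=0$ (and $g_{\delta,c}(0)=0$). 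Taking $s=\vphi_{\delta,c}(x,0)=\Phi_c(u^\delta(x,0))$ gives precisely the boundary condition in \eqref{eq:TWd}.

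There is no genuine obstacle here: the statement is bookkeeping combining Proposition~\ref{prop:statd} with the ODE for $\Phi_c$. The only points worth spelling out are the translation-invariance argument transferring harmonicity and $u^\delta_x/|\nabla u^\delta|^2 = 2u^\delta$ from $u$ to $u^\delta$, and the elementary check that $g_{\delta,c}$ is well defined and continuous on all of $[0,1]$. It is also natural to record here, for later use in Section~\ref{sec:proof}, that $x\mapsto u^\delta(x,0)=u(x,\delta^2)$ is strictly increasing with range $(0,\infty)$ --- its derivative in $x$ is $u_x(x,\delta^2)=\tfrac12\rho^{-1/2}\cos(\theta/2)>0$ for the polar angle $\theta\in(0,\pi)$ --- so that $\vphi_{\delta,c}(\cdot,0)$ sweeps out all of $(0,1)$.
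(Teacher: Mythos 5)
Your proposal is correct and follows essentially the paper's route: the boundary condition is obtained exactly as in the paper by the chain rule together with Proposition~\ref{prop:statd}, while the interior equation follows from the ODE $\Phi_c''+2cu\,\Phi_c'=0$ and the identity $u_x/|\nabla u|^2=2u$ of Section~\ref{sec:expl}, which the paper leaves implicit since $u^\delta$ is just a vertical translate of $u$. Your extra remarks (well-definedness of $g_{\delta,c}$ on $[0,1]$ and monotonicity of $u^\delta(\cdot,0)$) are accurate but not needed beyond what the paper records.
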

Note  that
$$ \int_0^1 g_{\delta,c} (v)\, dv =  \int_0^\infty g_\delta (\Phi(u)) \Phi'(u)\, du = \int_0^\infty \Phi'( u)^2\beta_\delta(u)\, du $$
which converges to $\frac{\pi}{8}$ as $\delta$ goes to $0$ (but is not equal to $\frac{\pi}{8}$ for $\delta>0$).

\begin{proof}
We have
\begin{eqnarray*}
 \frac{\partial \vphi _{\delta,c}}{\partial \nu} & = & \Phi_c'( u_{\delta,c}) \frac{\partial u_{\delta,c}}{\partial \nu}\\
 & = & - \Phi'_c ( u_{\delta,c})\beta_\delta( u_{\delta,c}),
\end{eqnarray*}
on $\partial \mathbb{R}^2_+ $
and therefore (using the definition of $g_{\delta,c}$)
$$\frac{\partial \vphi _{\delta,c}}{\partial \nu}  = -g_{\delta,c}(\Phi_c( u_{\delta,c})) = - g_{\delta,c}( \vphi_{\delta,c} ) \qquad \mbox{on $\partial \mathbb{R}^2_+ $.}$$
\end{proof}

As a consequence, the function
$$ u_{\delta,c}(t,x,y) = \Phi_c(u^\delta(x-ct,y)),$$
is a traveling wave solution of (\ref{eq:model}) but with a nonlinearity $g_{\delta,c}$ instead of $f$.
This solution will prove very useful in the proof of Theorem \ref{thm:1}, thanks to the following lemma:
\begin{lemma}\label{lem:cd}
The followings hold:
\begin{enumerate}
\item For all $\eta>0$ and $A>0$, there exists $K$ such that if $\delta= A/\sqrt c$ and $c\geq K$, then
\begin{equation*}
g_{\delta,c} (u+\eta) \geq f(u) \qquad \mbox{ for all } u\in[0,1].
\end{equation*}
\item  For all $\eta>0$, there exists  $c_0$ such that
$$ g_{\delta,c} (u)\leq \eta \qquad \mbox{ for all } u\in[0,1]$$
if $\sqrt c<c_0 \delta$.
\end{enumerate}
\end{lemma}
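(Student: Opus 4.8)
The plan is to reduce both statements to the explicit formula for $g_{\delta,c}$ furnished by Proposition~\ref{prop:TWd}. Writing a point of $[0,1)$ as $v=\Phi_c(u)$ with $u\in[0,\infty)$, and using $\Phi_c'(u)=\frac{2\sqrt c}{\sqrt\pi}e^{-cu^{2}}$ together with $\beta_\delta(u)=\frac1\delta\beta(u/\delta)$, one gets
\[
g_{\delta,c}(v)=\Phi_c'(u)\,\beta_\delta(u)=\frac{2\sqrt c}{\sqrt\pi\,\delta}\,\frac{(u/\delta)\,e^{-cu^{2}}}{1+4(u/\delta)^{4}},\qquad v=\Phi_c(u).
\]
Everything then amounts to choosing the right dimensionless variable and estimating an explicit function of one real variable.

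For part~(2), set $r=u/\delta$, so that the formula above reads $g_{\delta,c}=\frac{2\sqrt c}{\sqrt\pi\,\delta}\,\frac{r\,e^{-c\delta^{2}r^{2}}}{1+4r^{4}}$. Dropping the harmless factor $e^{-c\delta^{2}r^{2}}\le1$ and recalling that $\beta^{*}:=\sup_{r\ge0}\frac{r}{1+4r^{4}}=\sup_{r\ge0}\beta(r)$ is a finite absolute constant, we obtain $g_{\delta,c}(v)\le\frac{2\beta^{*}}{\sqrt\pi}\,\frac{\sqrt c}{\delta}$ for every $v\in[0,1]$. Under the hypothesis $\sqrt c<c_{0}\delta$ this is $<\frac{2\beta^{*}}{\sqrt\pi}c_{0}$, so the choice $c_{0}=\frac{\sqrt\pi}{2\beta^{*}}\,\eta$ finishes this case; it is essentially immediate.

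For part~(1), impose the scaling $\delta=A/\sqrt c$ and switch to the variable $t=\sqrt c\,u$, in which $v=\Phi_c(u)=\Phi(t)$ and
\[
g_{\delta,c}(v)=\frac{2c}{\sqrt\pi\,A^{2}}\,\frac{t\,e^{-t^{2}}}{1+4t^{4}/A^{4}}=:\frac{2c}{\sqrt\pi\,A^{2}}\,h_{A}(t),\qquad t=\Phi^{-1}(v).
\]
Since $f\equiv0$ on $[\alpha,1]$, it suffices to bound $g_{\delta,c}(w+\eta)$ from below for $w\in[0,\alpha]$ (the inequality is trivial for $w\in(\alpha,1]$), and we may assume $\eta$ is small enough that $\alpha+\eta<1$, which is the only range that matters. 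For such $w$, the value $t=\Phi^{-1}(w+\eta)$ ranges over the compact interval $[\Phi^{-1}(\eta),\,\Phi^{-1}(\alpha+\eta)]\subset(0,\infty)$, on which the continuous, strictly positive function $h_{A}$ attains a minimum $m=m(A,\eta)>0$. Hence $g_{\delta,c}(w+\eta)\ge\frac{2c}{\sqrt\pi\,A^{2}}\,m$, and taking $K=\frac{\sqrt\pi\,A^{2}}{2m}\,\|f\|_{L^\infty}$ gives $g_{\delta,c}(w+\eta)\ge\|f\|_{L^\infty}\ge f(w)$ whenever $c\ge K$.

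The one step I would treat carefully — and the place where the ignition temperature of $f$ is genuinely used — is the passage to a compact $t$-interval in part~(1): because $h_{A}(t)\to0$ as $t\to\infty$, there is no lower bound for $h_{A}$ on all of $(0,\infty)$, so one must exploit that $f$ vanishes for $v$ near $1$ to keep $t=\Phi^{-1}(w+\eta)$ bounded. Conceptually, the scaling $\delta=A/\sqrt c$ is designed so that the prefactor grows linearly in $c$ while $h_{A}$ stays fixed, whereas in part~(2) the opposite regime $\sqrt c\ll\delta$ makes the prefactor $\sqrt c/\delta$ small while the bounded profile $\beta$ absorbs the rest.
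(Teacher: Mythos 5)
Your proof is correct and follows essentially the same route as the paper: after the change of variables $t=\sqrt c\,u$ (so that $g_{\delta,c}$ becomes $c$ times a fixed positive profile when $\delta=A/\sqrt c$), part (1) is reduced, via $f\equiv 0$ on $[\alpha,1]$, to a lower bound on the compact interval $[\Phi^{-1}(\eta),\Phi^{-1}(\alpha+\eta)]$ and $c$ large, while part (2) follows from the uniform bound $g_{\delta,c}\leq C\,\|\beta\|_{L^\infty}\,\sqrt c/\delta$. Your explicit restriction $\alpha+\eta<1$ is the same implicit assumption the paper makes (it needs $\Phi^{-1}(\alpha+\eta)<\infty$), and it is harmless since the lemma is only applied with $\eta<(1-\alpha)/2$.
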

\begin{proof}
The first inequality is equivalent to (with $v$ such that $u+\eta=\Phi_c(v)$)
$$ 
g_{\delta,c}(\Phi_c(v))  \geq f(\Phi_c(v)-\eta)  \qquad \mbox{ for all } v\in[\Phi_c^{-1}(\eta),\infty).
$$
Using the definition of $g_{\delta,c}$ and the fact that $\Phi_c(v) =\Phi(\sqrt c\,v)$, this is equivalent to
$$
 \sqrt{c}\, \Phi'( \sqrt c\, v)\frac{1}{\delta} \beta\left(\frac{v}{\delta}\right) \geq f(\Phi(\sqrt c\, v)-\eta)   \qquad \mbox{ for all } v\in[\Phi_c^{-1}(\eta),\infty)
$$
and so (with $w=\sqrt c v$):
$$
c\, \Phi'(  w)\frac{1}{\sqrt c \,\delta}  \beta\left(\frac{w}{\sqrt c \, \delta}\right) \geq f(\Phi( w)-\eta)   \qquad \mbox{ for all } w\in[\Phi^{-1}(\eta),\infty).
$$
We now take $\delta = A/\sqrt c$, and so we only have to show that 
$$
c\, \Phi'(  w) \frac{1}{A}\beta\left(\frac{w}{A}\right) \geq f(\Phi( w)-\eta)   \qquad \mbox{ for all } w\in[\Phi^{-1}(\eta),\infty).
$$
Since $f(u)=0$ for $u\geq \alpha$ this inequality clearly holds for all $w\geq K_0 =\Phi^{-1}(\alpha+\eta)$.
On the compact set $[\Phi^{-1}(\eta),K_0]$, it is now easy to check that this inequality holds for large $c$ since the left hand side is bounded below
(note that since $f$ is Lipschitz, and so $f(u)\leq K u$, we can show that the choice of $c$ is uniform with respect to $\eta$).

\vspace{10pt}

The second inequality is much simpler: It is equivalent to
$$
c\, \Phi'(  w)\frac{1}{\sqrt c \,\delta}  \beta\left(\frac{w}{\sqrt c \, \delta}\right) \leq \eta \qquad \mbox{ for all } w\in[0,\infty],
$$
and  since $\Phi'(  w)\leq 1$, it is enough to show that
$$
\, \frac{\sqrt c}{ \delta}  \beta\left(\frac{w}{\sqrt c \, \delta}\right) \leq \eta  \qquad \mbox{ for all } w\in[0,\infty].
$$
We can thus take
$$ \frac{\sqrt c}{ \delta} \leq c_0:= \frac{\eta}{||\beta||_{L^\infty}}.$$

\end{proof}

\section{Proof of Theorem \ref{thm:1}}\label{sec:proof}
We can now construct a traveling wave solution of (\ref{eq:model}) and prove  Theorem~\ref{thm:1}. 
As in \cite{BLL} (see also \cite{BNtrav,MRS2}), the key steps of the proof are the construction of a solution in a 
truncated domain, and the passage to the limit when this domain goes to $\RR^2_+$.
The solutions constructed in the previous section will play a crucial role as barrier for our problem.
As usual, one of the main difficulty will be to make sure that we recover a finite, non trivial speed of propagation  $c$ at the limit.
\vspace{10pt}

First, we introduce our truncated domain: 
For $R>0$, we denote:
\begin{eqnarray*}
Q_R^+ & = & \{(x,y)\, ;\, -R < x < R \mbox{ and } 0<y< R^{1/4} \}\\
\Gamma^0_R  & = & \{(x,0)\, ;\,  -R< x <R\}=  \pa Q_R^+ \cap \{y=0\} \\
\Gamma^+_R & = &  \pa Q_R^+ \cap \{y>0\}. 
\end{eqnarray*}
Note that the fact that we use a rectangle (rather than, say, a ball) is necessary for the use of the sliding method of \cite{BNplane,BNqual} which will give us the monotonicity of the solutions. The choice of scaling in the $y$-direction ($R^{1/4}$) will be justified shortly.

We now want to solve (\ref{eq:TW}) in $Q^+_R$, but in order to do that, we have to prescribe some boundary condition on $\Gamma^+_R$. It seems natural to use $\vphi_c(x,y)=\Phi_c(u(x,y))$, the traveling wave  solution of the free boundary problem \eqref{FBTW} defined in Proposition~\ref{prop:FBTW}.
However, in order to use the sliding method, it is important that the boundary condition be constant equal to $0$ (respectively $1$) on the lateral boundary $x=-R$ (respectively $x=R$).
We thus define the following function on $Q_R^+$:
$$ \bar \Psi_{c,R} (x,y) = \frac{\vphi_c(x,y)-\vphi_c(-R,R^{1/4})}{\vphi_c(R,0)-\vphi_c(-R,R^{1/4})}.$$
Using the fact that $\vphi_c(x,y)$ is increasing with respect to $x$ (for $y$ fixed) and $y$ (for $x$ fixed), it is easy to check that
$$\bar  \Psi_{c,R} (-R,y) \leq \bar  \Psi_{c,R} (-R,R^{1/4})=0  \quad \mbox{ for all $ y\in[0,R^{1/4}]$},$$
and
$$ \bar \Psi_{c,R} (R,y)\geq \bar \Psi_{c,R} (R,0)= 1 \quad \mbox{ for all $ y\in[0,R^{1/4}]$}.$$
Furthermore, one can check (using the formula \eqref{eq:uu}) that
$$ \begin{array}{ll}
\vphi_c(R,0) \longrightarrow 1 & \mbox{ as } R\to\infty\\
\vphi_c(-R,R^{1/4}) \longrightarrow 0 & \mbox{ as } R\to\infty
\end{array}
$$
(this second limit is what motivated the scaling $R^{1/4}$ in the definition of $Q_R^+$),
and thus 
$$ \bar  \Psi_{c,R} (x,y) \longrightarrow \vphi_c(x,y)  \quad \mbox{ as } R\to\infty
$$
uniformly in $Q^+_R$.
We now truncate $\bar\Psi_{c,R} $ by $0$ and $1$, that is we define
$$ \Psi_{c,R}  = \sup(\inf(\bar\Psi_{c,R}  ,1),0).$$
We then have:
\begin{equation}\label{eq:psi01}
  \Psi_{c,R} (-R,y) = 0 \quad \mbox{ and } \quad \Psi_{c,R} (R,y)=1 \quad \mbox{ for all $ y\in[0,R^{1/4}]$}.
  \end{equation}
and
$$ \Psi_{c,R} (x,y) \longrightarrow \vphi_c(x,y)  \quad \mbox{ as } R\to\infty  \mbox{ uniformly in $Q^+_R$.}
$$
More precisely, a simple computation yields
\begin{equation}\label{eq:psilim}
\sup_{Q^+_R}| \Psi_{c,R} (x,y)-  \vphi_c(x,y)|\leq \frac{1}{\vphi_c(R,0)-\vphi_c(-R,R^{1/4})} -1 \longrightarrow 0   \; \mbox{ as } R\to\infty .
\end{equation}
\vspace{15pt}

Now, for a given $R>0$ and $c>0$, we consider the following problem in $Q_R^+$:
\begin{equation}\label{eq:trunc}
\left \{
\begin{array}{lll}
\Delta \vp +c\,  \pa_x \vp =0,\,\,\, & \mbox{ in $Q_R^+ $}\\[5pt]
\dd \frac{\partial \vp }{\partial \nu} = -f(\vp ),\,\,\,& \mbox{ on $\Gamma^0_R$} \\[5pt]
\vp =\Psi_{c,R} (x,y)  &  \mbox{ on $\Gamma^+_R$.}
\end{array}
\right.
\end{equation}
\vspace{10pt}

The proof of Theorem \ref{thm:1} is as follows: We  first prove that for all $R$ there is a unique $c_R$ such that the solution of \eqref{eq:trunc} satisfies $v(0,0)=\alpha$. We then pass to the limit $R\to\infty$ and check that the limit is the solution we were looking for.
\vspace{10pt}

\subsection{Solutions of the truncated problem \eqref{eq:trunc}}
In this section, we prove the following proposition:
\begin{proposition}\label{prop:trunc}
There exists $R_0$ such that for all $R\geq R_0$ there exists  $c_R$ such that the corresponding solution $\vp _R$ of (\ref{eq:trunc}) satisfies
$$0\leq v(x,y)\leq 1\, , \qquad  \vp _R (0,0)=\alpha.$$
Furthermore,
\begin{enumerate}
\item $\vp _R$ is in $\mathcal C^{1,\alpha}(\overline{Q^+_{R/2}})  $ for all $\alpha\in(0,1)$ and 
\begin{equation}\label{eq:estimate}
|| v_R||_{\mathcal C^{1,\alpha}(\overline{ Q^+_{R_0}}) } \leq C(R_0) \qquad \mbox{ for all } R\geq 2R_0.
\end{equation}
\item There exists $K$ such that $0< c_R\leq K$.
\item The function $\vp _R$ is non-decreasing with respect to $x$ (for all $y$).
\end{enumerate}
\end{proposition}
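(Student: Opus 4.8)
The plan is to construct $\vp_R$ for fixed $R$ and $c$ by a standard elliptic argument, then use a shooting/continuity argument in the parameter $c$ to pin down $c_R$ so that $\vp_R(0,0)=\alpha$. First, for fixed $c>0$ and $R\geq R_0$, I would solve \eqref{eq:trunc} by the method of sub- and supersolutions: $0$ is a subsolution (since $f\geq 0$ and $\Psi_{c,R}\geq 0$ on $\Gamma^+_R$) and $1$ is a supersolution (since $f\geq 0$, $\Psi_{c,R}\leq 1$); monotone iteration, together with the maximum principle for the operator $\Delta + c\pa_x$ with the oblique (Neumann-type) condition on $\Gamma^0_R$, yields a solution with $0\leq\vp_R\leq 1$. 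Uniqueness for fixed $c$ is not needed, but one does need the solution to be chosen in a way that depends continuously (or at least monotonically) on $c$; I would take the minimal solution obtained by iterating upward from $0$, which depends monotonically on $c$ in a way to be exploited below. Interior and boundary elliptic regularity (Schauder up to the flat boundary $\Gamma^0_R$, away from the corners and from $\Gamma^+_R$) give $\vp_R\in \mathcal C^{1,\alpha}(\overline{Q^+_{R/2}})$, and the bound \eqref{eq:estimate} follows because $0\leq\vp_R\leq 1$ and the coefficients and data are controlled uniformly in $R$ on the fixed subdomain $Q^+_{R_0}$; this is item (1).

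For item (3), monotonicity in $x$, I would use the sliding method of Berestycki--Nirenberg: this is exactly why $Q_R^+$ was taken to be a rectangle and why the lateral boundary data were arranged to be identically $0$ on $x=-R$ and identically $1$ on $x=R$ (see \eqref{eq:psi01}). One compares $\vp_R(x,y)$ with the shifted function $\vp_R(x+\tau,y)$ on the overlap of $Q_R^+$ with its translate; for $\tau$ close to $2R$ the comparison holds trivially (because of the boundary values $0$ and $1$ and $0\leq\vp_R\leq1$), and one decreases $\tau$ to $0$, using the strong maximum principle and the Hopf lemma on $\Gamma^0_R$ (valid since $f$ is Lipschitz) to prevent interior or boundary touching. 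This gives $\pa_x\vp_R\geq 0$ throughout. In particular the monotone dependence on $c$ of the minimal solution can also be revisited here: if $c_1<c_2$ then $\vp_{R,c_1}$ is a subsolution for the $c_2$-problem (since $\pa_x\vp\geq 0$ makes $\Delta\vp_{R,c_1}+c_2\pa_x\vp_{R,c_1}\geq 0$), so $c\mapsto\vp_{R,c}$ is nondecreasing, hence $c\mapsto\vp_{R,c}(0,0)$ is nondecreasing.

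The heart of the proposition is item (2) together with the existence of $c_R$ with $\vp_R(0,0)=\alpha$, and this is where the explicit barriers of Section \ref{sec:statreg} enter and where I expect the main difficulty. Define $\Theta(c)=\vp_{R,c}(0,0)$; by the previous paragraph $\Theta$ is nondecreasing in $c$, and one checks it is continuous (monotone limits of solutions with uniform $\mathcal C^{1,\alpha}$ bounds are again solutions, and the minimal solution is left/right continuous; a small extra argument rules out jumps). For small $c$: using Lemma \ref{lem:cd}(2), if $\sqrt c$ is small compared to some $\delta$ then the nonlinearity $g_{\delta,c}$ governing the regularized wave $\vphi_{\delta,c}$ is as small as we like, which — combined with the fact that $\vphi_{\delta,c}$ sits below $\Psi_{c,R}$ on $\Gamma^+_R$ for $R$ large — lets us use $\vphi_{\delta,c}$ (or rather a suitable truncation/translate) as a supersolution forcing $\vp_{R,c}(0,0)<\alpha$ when $c$ is small. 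For large $c$: using Lemma \ref{lem:cd}(1), choosing $\delta=A/\sqrt c$ with $c\geq K$ makes $g_{\delta,c}(\cdot+\eta)\geq f(\cdot)$, so a downward translate of $\vphi_{\delta,c}$ is a subsolution of \eqref{eq:trunc}, which — after checking it lies below $\Psi_{c,R}$ on $\Gamma^+_R$ and using that the wave speed $c$ of $\vphi_{\delta,c}$ relative to its profile pushes its value at the origin up — forces $\vp_{R,c}(0,0)>\alpha$ for $c$ large (uniformly in $R\geq R_0$). By the intermediate value theorem there is $c_R$ with $\Theta(c_R)=\alpha$, and the large-$c$ barrier also yields the uniform upper bound $c_R\leq K$; positivity $c_R>0$ follows from the small-$c$ analysis since $\Theta(0^+)<\alpha$. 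The delicate points I anticipate are: (a) verifying that the explicit barriers, which solve the equations on all of $\RR^2_+$, can be compared with $\vp_R$ on the truncated domain $Q_R^+$ — this requires controlling them on $\Gamma^+_R$, which is precisely what the estimates \eqref{eq:psilim} and the $R^{1/4}$ scaling are designed to handle; and (b) obtaining the strict inequalities $\Theta(c)<\alpha$, $\Theta(c)>\alpha$ uniformly in $R$, so that $c_R$ is bounded away from $0$ and $\infty$ independently of $R$, which is what makes the subsequent passage to the limit $R\to\infty$ possible.
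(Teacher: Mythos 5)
Your overall strategy is the same as the paper's (sub/supersolutions $0$ and $1$, sliding for monotonicity in $x$, the regularized explicit waves $\vphi_{\delta,c}$ combined with Lemma \ref{lem:cd} as barriers, and an intermediate value argument in $c$), but there are two genuine gaps in the step that pins down $c_R$. The first concerns the continuity of $c\mapsto \vp_{R,c}(0,0)$. You discard uniqueness and instead work with the minimal solution, claiming it is monotone in $c$ because $\vp_{R,c_1}$ is a subsolution of the $c_2$-problem when $c_1<c_2$. This ignores that the lateral boundary datum $\Psi_{c,R}$ in \eqref{eq:trunc} itself depends on $c$ (through $\Phi_c(u)=\Phi(\sqrt c\,u)$ and the renormalization), so $\vp_{R,c_1}$ need not lie below $\Psi_{c_2,R}$ on $\Gamma^+_R$ and is not automatically admissible for comparison; and even granting monotonicity of $\Theta(c)=\vp_{R,c}(0,0)$, a monotone function can jump across $\alpha$, so the ``small extra argument ruling out jumps'' that you defer is precisely what the intermediate value theorem needs. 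The paper's route is cheaper: the same sliding argument you already invoke for monotonicity in $x$ (using \eqref{eq:psi01} and the monotonicity of $\Psi_{c,R}$ in $x$) also gives \emph{uniqueness} of the solution for each fixed $c$, and uniqueness together with the uniform $\mathcal C^{1,\alpha}$ estimates yields genuine continuity of $c\mapsto\vp_c(0,0)$ by compactness; no monotonicity in $c$ is needed anywhere.

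The second gap is in the small-$c$ barrier. For a supersolution the comparison on $\Gamma^+_R$ must go the other way: it has to \emph{dominate} $\Psi_{c,R}$ there, not ``sit below'' it. Moreover $\vphi_{\delta,c}$ by itself is not a supersolution of the $f$-boundary condition, since $\frac{\partial \vphi_{\delta,c}}{\partial\nu}=-g_{\delta,c}(\vphi_{\delta,c})<0=-f(\vphi_{\delta,c})$ wherever $\vphi_{\delta,c}\ge\alpha$. The paper repairs both points at once by taking $w=\vphi_{\delta,c}+\frac{\eta}{R}(R-y)$ with $\eta<\alpha/2$: the additive term makes $w\ge \vphi_c+\eta/2\ge\Psi_{c,R}$ on $\Gamma^+_R$ (via \eqref{eq:psilim}) and gives $\frac{\partial w}{\partial\nu}=-g_{\delta,c}(w-\eta)+\eta/R\ge 0\ge -f(w)$ once $\sqrt c\le c_0(\eta/R)\,\delta$ by Lemma \ref{lem:cd}(2), whence $\vp_c(0,0)\le\Phi(\sqrt c\,\delta)+\eta<\alpha$. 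Symmetrically, in the large-$c$ case the useful object is the vertical truncation $(\vphi_{\delta,c}-\eta)_+$ with $\delta=A/\sqrt c$ and $\eta<(1-\alpha)/2$, which lies below $\Psi_{c,R}$ on $\Gamma^+_R$ and whose value at the origin is exactly $(\Phi(A)-\eta)_+$, made larger than $\alpha$ by choosing $A$ large; no heuristic about ``the wave speed pushing the value up'' is required. (Your regularity argument via Neumann Schauder bootstrap is fine and is an acceptable alternative to the paper's device of integrating in $y$ and reflecting oddly, so item (1) is not an issue.)
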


First, we show:
\begin{lemma}\label{lem:ex}
For all $R>0$ and all $c>0$, Equation (\ref{eq:trunc}) has a unique solution $\vp _c(x,y)$.
Furthermore, $0\leq v(x,y)\leq 1$, $\vp _c(x,y)$ is increasing with respect to $x$ (for all $y$) and the function 
$c\mapsto \vp _c(0,0)$ is continuous with respect to $c$.
\end{lemma}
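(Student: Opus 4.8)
The plan is to prove Lemma \ref{lem:ex} by a combination of the method of sub- and super-solutions (to get existence and the bounds $0 \le v \le 1$), the sliding method (to get monotonicity in $x$ and uniqueness), and a compactness/continuity argument (to get continuity of $c \mapsto v_c(0,0)$).

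First I would establish \emph{existence} together with the a priori bounds $0 \le v_c \le 1$. The constant function $\underline v \equiv 0$ is a subsolution: it is harmonic, satisfies $\partial_\nu \underline v = 0 \ge -f(0) = 0$ on $\Gamma^0_R$, and lies below $\Psi_{c,R} \ge 0$ on $\Gamma^+_R$. Similarly $\overline v \equiv 1$ is a supersolution since $f(1) = 0$, so $\partial_\nu \overline v = 0 = -f(1)$, and $\Psi_{c,R} \le 1$ on $\Gamma^+_R$. Because $f$ is Lipschitz, the standard monotone iteration scheme for the linear elliptic operator $\Delta + c\partial_x$ with the oblique (Neumann-type) boundary condition on $\Gamma^0_R$ and Dirichlet data on $\Gamma^+_R$ produces a solution $v_c$ with $0 \le v_c \le 1$; one adds a large multiple of $v$ to both sides of the boundary relation to make the iteration monotone. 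Elliptic regularity (interior plus boundary estimates for the mixed problem, using that the two parts of the boundary meet at a corner with the right angle, and that $\Psi_{c,R}$ is smooth) gives $v_c$ at least $\mathcal C^{1,\alpha}$ up to $\Gamma^0_R$ away from the corners.

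Next I would prove \emph{monotonicity in $x$} and \emph{uniqueness} simultaneously via the sliding method of Berestycki--Nirenberg \cite{BNplane,BNqual}. For $\tau > 0$ consider $v_c^\tau(x,y) := v_c(x - \tau, y)$ on the translated rectangle, compared with $v_c$ on the overlap. The boundary conditions \eqref{eq:psi01} are exactly what makes this work: on the left wall $v_c^\tau$ sees the value $1 \ge v_c$ is false — rather, one slides so that on the part of $\partial$ of the overlap coming from $x = -R$ one has $v_c = 0 \le v_c^\tau$, and on the part coming from $x = R-\tau$ one has $v_c^\tau = 1 \ge v_c$; on $\Gamma^+_R$ one uses that $\Psi_{c,R}$ is nondecreasing in $x$. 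Starting from $\tau$ close to $2R$ (where the overlap is empty or the inequality $v_c^\tau \ge v_c$ is trivial) and decreasing $\tau$, the maximum principle for $\Delta + c\partial_x$ together with the Hopf lemma at $\Gamma^0_R$ (here the Lipschitz bound $f(a) - f(b) \le L(a-b)$ lets one absorb the nonlinear term) shows the set of admissible $\tau$ is open and closed, hence $v_c^\tau \ge v_c$ for all $\tau \in (0, 2R]$, which is monotonicity in $x$. The same sliding comparison between any two solutions, sliding in both directions, forces them to coincide, giving uniqueness. I would be a little careful about the corners of $Q_R^+$, where the boundary is only Lipschitz; since the opening angle is $\pi/2$ and the data are compatible there, barrier arguments à la \cite{CS} handle this, but this is the spot where one must be slightly cautious.

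Finally, \emph{continuity of $c \mapsto v_c(0,0)$}. Given $c_n \to c$, the uniform bound $0 \le v_{c_n} \le 1$ and interior/boundary elliptic estimates (uniform in $n$ since $c_n$ stays bounded) give $\mathcal C^{1,\alpha}_{\mathrm{loc}}$ compactness on $\overline{Q^+_R}$ away from the corners; a subsequence converges to some $v_*$ which, by stability of the equation, the boundary conditions, and the bounds under this convergence, solves \eqref{eq:trunc} with speed $c$. By the uniqueness just proved, $v_* = v_c$, and since the limit is independent of the subsequence the whole sequence converges, so $v_{c_n}(0,0) \to v_c(0,0)$. The main obstacle I anticipate is not any single step in isolation but the bookkeeping around the mixed boundary condition — verifying that the maximum principle, Hopf lemma, and elliptic regularity all apply cleanly to $\Delta + c\partial_x$ with a nonlinear oblique condition on $\Gamma^0_R$ and Dirichlet data on $\Gamma^+_R$ near the corners where the two meet; the tools from \cite{CS} are designed precisely for this and I would lean on them.
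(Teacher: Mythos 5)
Your proposal is correct and follows essentially the same route as the paper: existence and the bounds $0\le v_c\le 1$ from the sub/super-solutions $0$ and $1$ (the paper invokes Perron's method, your monotone iteration is an equivalent standard alternative), uniqueness and monotonicity in $x$ from the sliding method of \cite{BNplane,BNqual} using \eqref{eq:psi01} and the monotonicity of $\Psi_{c,R}$, and continuity of $c\mapsto v_c(0,0)$ by compactness plus uniqueness, which the paper leaves implicit. The only blemish is the momentary mix-up of the translation direction in the sliding step (your comparisons correspond to $v_c^\tau(x,y)=v_c(x+\tau,y)$ rather than $v_c(x-\tau,y)$), but the argument as used is the correct one.
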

\begin{proof}
It is readily seen that the function  $u= 0$ and $u=1$ are respectively sub and super-solution of (\ref{eq:trunc}).  The existence of a solution can thus be established, for instance, using Perron's method.
Using (\ref{eq:psi01}) and the fact that $\Psi_{c,R} (x,y)$ is monotone increasing with respect to $x$, the classical sliding method of \cite{BNqual,BNplane} shows that the solution is unique and that the function $x\mapsto \vp _c(x,y)$ is increasing (for all $y$). 
\end{proof}

It is clear that $v_R$ is smooth inside $Q_R^+$.  
In order to study the regularity of $\vp _R$ up to the boundary $\Gamma^0_R$ and derive \eqref{eq:estimate}, we use a very nice tool introduced by  Cabr\'e and Sol\`a-Morales in \cite{CS}: 
We note that the function
$$w(x,y) = \int_0^y \vp_R (x,z)\, dz$$
is solution to
$$ 
\left\{
\begin{array}{ll}
\Delta w+ c\pa_x w = f(\vp_R (x,0)) &\mbox{ in $Q_R^+$}\\[5pt]
w = 0 & \mbox{ on $\Gamma^0_R$}
\end{array}
\right.
$$
and so the odd reflexion $\bar w$ of $w$ (with respect to the $x$ axis) solves
$$
\Delta \bar w+ c\pa_x \bar w = f(\vp (x,0)) \mbox{ in $Q_R$.}
$$
Using standard regularity results, it is then not difficult to show:
\begin{lemma}\label{lem:comp}
If $f$ is Lipschitz, then $\bar w$ is in $\mathcal C^{2,\alpha}(Q_{R/2})$ for all $\alpha\in (0,1)$ and so
$\vp _R$ is in $\mathcal C^{1,\alpha}(\overline{Q^+_{R/2}})$.

Moreover, for all $R_0$, there exists $C(R_0)$ such that \eqref{eq:estimate} holds.
\end{lemma}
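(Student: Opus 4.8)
The plan is to follow the strategy sketched above: use the auxiliary function $w$ to convert the nonlinear Neumann problem for $\vp _R$ on $\Gamma^0_R$ into a \emph{linear Dirichlet} problem with a right-hand side that does not depend on $y$, and then bootstrap. Since $0\le \vp _R\le 1$ (Lemma~\ref{lem:ex}) and $f$ is Lipschitz on $[0,1]$, the source $F(x):=f(\vp _R(x,0))$ is bounded on $\Gamma^0_R$, with a bound independent of $R$ and of $c=c_R$ (the latter being bounded by $K$ via Proposition~\ref{prop:trunc}). First I would record that $\vp _R$ is smooth in the interior of $Q_R^+$ by standard interior elliptic regularity, so that the identity $\Delta w+c\,\pa_x w=F(x)$ in $Q_R^+$, together with $w=0$ on $\Gamma^0_R$ and $\pa_y w(x,0)=f(\vp _R(x,0))$, holds classically in the interior and encodes the boundary condition, exactly as computed in the excerpt.

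Next I would justify the reflected equation. Because $w=0$ on $\{y=0\}$ and, by the elementary formulas above, $\nabla w$ extends continuously across $\{y=0\}$ (indeed $\pa_x w(x,0)=0$ and $\pa_y w(x,0)=\vp _R(x,0)$), the odd reflection $\bar w$ of $w$ across the $x$-axis belongs to $C^1$ across $\{y=0\}$, and its distributional second derivatives carry no measure supported on $\{y=0\}$; hence $\bar w$ solves $\Delta\bar w+c\,\pa_x\bar w=\bar F$ in $Q_R$ in the weak sense, where $\bar F$ is the odd extension of $F$ and thus $\bar F\in L^\infty(Q_R)$. Interior $L^p$ (Calderón--Zygmund) estimates then give $\bar w\in W^{2,p}_{\mathrm{loc}}(Q_R)$ for every $p<\infty$, with $\|\bar w\|_{W^{2,p}(Q')}\le C\big(\|\bar F\|_{L^p(Q_R)}+\|\bar w\|_{L^p(Q_R)}\big)$ on $Q'\Subset Q_R$, so by Sobolev embedding $\bar w\in C^{1,\gamma}_{\mathrm{loc}}(Q_R)$ for all $\gamma\in(0,1)$. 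Restricting to $y\ge0$ yields $w\in C^{1,\gamma}$ up to $\Gamma^0_{R/2}$, hence $\vp _R=\pa_y w\in C^{0,\gamma}(\overline{Q^+_{R/2}})$; in particular the trace $x\mapsto \vp _R(x,0)$ is $\gamma$-Hölder continuous.

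Now I would close the bootstrap. Since $\vp _R(\cdot,0)\in C^{0,\gamma}$ and $f$ is Lipschitz, $F=f(\vp _R(\cdot,0))\in C^{0,\gamma}$, and as it does not depend on $y$ it lies in $C^{0,\gamma}(\overline{Q^+_{R/2}})$. Applying boundary Schauder estimates to the Dirichlet problem $\Delta w+c\,\pa_x w=F$ in $Q^+_{R/2}$, $w=0$ on the flat portion $\{y=0\}$, gives $w\in C^{2,\gamma}(\overline{Q^+_{\rho}})$ for $\rho<R/2$ with $\|w\|_{C^{2,\gamma}}\le C\big(\|F\|_{C^{0,\gamma}}+\|w\|_{L^\infty}\big)$; equivalently, $\bar w\in C^{2,\gamma}$ on each side of $\{y=0\}$ and $C^{1,\gamma}$ across it. Differentiating in $y$ gives $\vp _R=\pa_y w\in C^{1,\gamma}(\overline{Q^+_{\rho}})$, which is the claimed regularity. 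For the uniform bound \eqref{eq:estimate}, take $R\ge 2R_0$ so that $\overline{Q^+_{R_0}}$ sits inside $Q^+_{R/2}$ up to the flat boundary: every constant appearing above depends only on $R_0$, on $\|f\|_{\mathrm{Lip}}$, and on the uniform bounds $0\le\vp _R\le1$ and $0<c_R\le K$ from Lemma~\ref{lem:ex} and Proposition~\ref{prop:trunc}, hence not on $R$, which yields $\|\vp _R\|_{C^{1,\gamma}(\overline{Q^+_{R_0}})}\le C(R_0)$ for all $R\ge 2R_0$.

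The step requiring care, and the main obstacle, is the passage through $\{y=0\}$: one must verify that the reflected function genuinely solves the equation in the full rectangle $Q_R$ with no spurious measure on $\{y=0\}$ — this is precisely where the Dirichlet condition $w=0$ (rather than the original nonlinear Neumann condition on $\vp _R$) is essential — and one must observe that, since $f$ is only Lipschitz, the bootstrap stabilizes at the $C^{1,\gamma}$ level for $\vp _R$ (equivalently $C^{2,\gamma}$ for $w$ up to the flat boundary), which is exactly the regularity asserted and all that is used later. The uniformity in $R$ is then mere bookkeeping, contingent only on the a priori bounds on $\vp _R$ and $c_R$ already available.
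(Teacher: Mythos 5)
Your proof is correct and follows essentially the same route as the paper's (which reproduces Lemma 2.2 of Cabr\'e--Sol\`a-Morales): the auxiliary function $w=\int_0^y v_R(x,z)\,dz$, odd reflection, bounded right-hand side giving $W^{2,p}$ hence $C^{1,\gamma}$, then H\"older continuity of the trace and of $f(v_R(\cdot,0))$, and a final Schauder step yielding $C^{1,\gamma}$ regularity of $v_R$ up to the flat boundary, with uniformity in $R$ coming from $0\le v_R\le 1$, $c_R\le K$ and the Lipschitz bound on $f$. Your additional care is actually an improvement: since the reflected source is the \emph{odd} extension of $f(v_R(x,0))$, which is only $L^\infty$ across $\{y=0\}$, the last step must indeed be a boundary Schauder estimate in the half-rectangle as you perform it (the paper's literal claim that $\bar w\in \mathcal C^{2,\alpha}$ across $\{y=0\}$ is a harmless imprecision, as $\partial_{yy}\bar w$ jumps by $2f(v_R(x,0))$ there); the only slip is in your first paragraph, where ``$\partial_y w(x,0)=f(v_R(x,0))$'' should read $\partial_{yy}w(x,0^+)=f(v_R(x,0))$, consistent with the correct identity $\partial_y w(x,0)=v_R(x,0)$ you use later.
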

\begin{proof} The proof is the same as that of  Lemma 2.2 in \cite{CS}. We recall it here:
First, since $ f(\vp (x,0))\in L^\infty$, we have  $\bar w  \in W^{2,p}(Q_{R/2})$ for all $p<\infty$ and so $\bar w  \in \mathcal C^{1,\alpha}(Q_{R/2})$ (Sobolev embeddings).  It follows that $v_R$ is in $C^{\alpha}(Q_{R/2})$ and since $f$ is Lipschitz, we deduce $ f(\vp_R (x,0))\in C^{\alpha}(Q_{R/2})$. 
Classical Shauder estimates now yields $\bar w  \in \mathcal C^{2,\alpha}(Q_{R/2})$ which completes the proof.
\end{proof}
Note that if $f \in\mathcal C^{1,\alpha}$, then we can show  $\vp _R\in \mathcal C^{2,\alpha}(\overline{Q^+_{R/2}})$.

\vspace{10pt}

It remains to show that we can choose $c$ so that $v_R(0,0)=\alpha$. This will follows from the following lemma:
\begin{lemma}\label{lem:speed}
There exists $R_0$ and $K$ such that if $R\geq R_0$, then the solution of (\ref{eq:trunc}) satisfies
\begin{enumerate}
\item If $c\geq K$ then $\vp _c(0,0)> \alpha $.
\item  $\liminf_{c\to 0^+} \vp _c(0,0)< \alpha $.
\end{enumerate}
\end{lemma}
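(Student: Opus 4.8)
The plan is to use the explicit regularized traveling wave solutions of Section \ref{sec:statreg} as barriers (sub- and super-solutions) for the truncated problem \eqref{eq:trunc}, exploiting the comparison principle together with Lemma \ref{lem:cd}. For part (1), I would fix $\eta>0$ small (to be chosen so that $\alpha+\eta<1$ and compatible with the value we need at the origin) and apply part (1) of Lemma \ref{lem:cd}: there is a constant $A>0$ and a threshold $K$ so that for $c\geq K$ and $\delta=A/\sqrt c$ one has $g_{\delta,c}(u+\eta)\geq f(u)$ for all $u\in[0,1]$. Then the function $\psi(x,y)=\Phi_c(u^\delta(x,y))-\eta$ (suitably interpreted, i.e. $\sup$ with $0$) is, after checking the boundary inequality on $\Gamma^+_R$ for $R$ large, a subsolution of \eqref{eq:trunc}: indeed it solves the PDE $\Delta\psi+c\pa_x\psi=0$ in $Q_R^+$, and on $\Gamma^0_R$ its Neumann data is $-\Phi_c'(u^\delta)\beta_\delta(u^\delta)=-g_{\delta,c}(\psi+\eta)\leq -f(\psi)$, which is the correct subsolution inequality. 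By the sliding/comparison argument (Lemma \ref{lem:ex}) we get $v_c\geq \psi$ in $Q_R^+$, so $v_c(0,0)\geq \Phi_c(u^\delta(0,0))-\eta$. The key point is that $u^\delta(0,0)=u(0,\delta^2)=\delta$ (from $u(x,0)=\sqrt{x_+}$ applied after the vertical shift, or directly from \eqref{eq:fun}), hence $\Phi_c(u^\delta(0,0))=\Phi_c(\delta)=\Phi(\sqrt c\,\delta)=\Phi(A)$, a fixed positive number independent of $c$. Choosing $A$ large enough at the outset so that $\Phi(A)>\alpha+\eta$, we conclude $v_c(0,0)>\alpha$ for all $c\geq K$, with $R_0$ chosen large enough that the boundary comparison on $\Gamma^+_R$ holds (using \eqref{eq:psilim} and the explicit form of $u^\delta$).

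For part (2), I would argue symmetrically with a supersolution. Fix $\eta<\alpha$. By part (2) of Lemma \ref{lem:cd}, there is $c_0$ such that $g_{\delta,c}(u)\leq\eta$ for all $u\in[0,1]$ whenever $\sqrt c<c_0\delta$; equivalently, for any small $c$ we may pick $\delta$ (say $\delta=\sqrt c/(c_0/2)$, so $\delta\to 0$ as $c\to 0$) making the nonlinearity of the regularized problem uniformly $\leq\eta$. The regularized traveling wave $\vphi_{\delta,c}=\Phi_c(u^\delta)$ solves \eqref{eq:TWd} with nonlinearity $g_{\delta,c}$; since $g_{\delta,c}\leq\eta$ pointwise, and $f$ is the ignition nonlinearity, $\vphi_{\delta,c}$ is a supersolution of \eqref{eq:trunc} on $\Gamma^0_R$ provided $\vphi_{\delta,c}\leq\eta$ there — but that is not automatic, so instead I would compare differently: I would use that $g_{\delta,c}(v)\leq\eta$ forces the solution $v_c$ to be small near $y=0$, via the function $w(x,y)=\int_0^y v_c(x,z)\,dz$ and the Cabr\'e–Sol\`a-Morales reflection trick of Lemma \ref{lem:comp}. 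Alternatively, and more cleanly, the standard Berestycki–Larrouturou–Lions argument applies: if $c$ is very small the advection term cannot sustain the reaction, so the solution is pushed down. Concretely, I would show that $v_c(0,0)$ stays below $\alpha$ along a sequence $c\to 0$ by noting that if $v_c(0,0)\geq\alpha$ for all small $c$, then by monotonicity in $x$ the reaction region $\{v_c(\cdot,0)\in(0,\alpha)\}$ has length bounded below, and an integration of the equation against a suitable test function (or a direct energy/flux balance, integrating $\Delta v_c+c\pa_x v_c=0$ over $Q_R^+$ and using the Neumann condition) gives $c\int_{-R}^{R}\pa_x v_c(x,0)\,dx\sim c\to 0$ must balance $\int f(v_c(x,0))\,dx$, which is bounded below — a contradiction. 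This yields $\liminf_{c\to0^+}v_c(0,0)<\alpha$.

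The main obstacle I anticipate is part (2): ensuring the lower bound on the reaction integral is genuinely uniform as $c\to0$. One must rule out the possibility that, as $c\to0$, the solution $v_c$ degenerates so that $v_c(\cdot,0)$ spends essentially no "mass" in the ignition interval $(0,\alpha)$ — i.e. one needs a quantitative lower bound on $\int_{\Gamma^0_R} f(v_c(x,0))\,dx$ that does not vanish with $c$, which is exactly where the hypothesis $v_c(0,0)=\alpha$ (or $\geq\alpha$) and the monotonicity in $x$ must be combined with an a priori estimate (from Lemma \ref{lem:comp}) to control $\pa_x v_c$ on $\Gamma^0_R$ and hence close the flux-balance contradiction. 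A clean route is to use the regularized supersolution of Proposition \ref{prop:TWd} with a slow speed: since for $\sqrt c<c_0\delta$ the regularized traveling wave has nonlinearity $\leq f$ replaced by something tiny, and yet its trace at a translate can be made to dominate the boundary data of $v_c$, the comparison principle forces $v_c(0,0)$ below any prescribed level $<\alpha$ — this avoids the delicate integral estimate entirely and is the approach I would ultimately adopt, mirroring how part (1) uses the regularized subsolution.
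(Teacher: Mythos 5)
Your part (1) is essentially the paper's proof: the subsolution $(\varphi_{\delta,c}-\eta)_+$ with $\delta=A/\sqrt c$, Lemma \ref{lem:cd}(1) for the Neumann inequality on $\{w>0\}$, \eqref{eq:psilim} for the domination by $\Psi_{c,R}$ on $\Gamma^+_R$, and the choice of $A$ so that the value at the origin, $\Phi(\sqrt c\,u^\delta(0,0))-\eta$, exceeds $\alpha$ (note $u^\delta(0,0)=\delta/\sqrt2$, not $\delta$, but this only changes $A$ by a harmless factor). That half is fine and matches the paper.

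Part (2) has a genuine gap, and it sits exactly at the point you flagged and then did not resolve. Knowing $g_{\delta,c}\le\eta$ does \emph{not} make $\varphi_{\delta,c}$ (translated or not) a supersolution of \eqref{eq:trunc}: the supersolution inequality on $\Gamma^0_R$ is $\partial_\nu w\ge -f(w)$, and wherever $w\ge\alpha$ one has $f(w)=0$ while $\partial_\nu\varphi_{\delta,c}=-g_{\delta,c}(\varphi_{\delta,c})<0$, so the inequality fails there no matter how small $g_{\delta,c}$ is; your closing ``clean route'' simply restates this unworkable comparison. The paper's missing device is a corrector: it compares with $w(x,y)=\varphi_{\delta,c}(x,y)+\frac{\eta}{R}(R-y)$, which still satisfies $\Delta w+c\,\partial_x w=0$, dominates $\Psi_{c,R}$ on $\Gamma^+_R$ (the lift by at least $\eta/2$ together with \eqref{eq:psilim}), and whose normal derivative on $y=0$ gains the term $+\eta/R$; applying Lemma \ref{lem:cd}(2) with the \emph{smaller} threshold $\eta/R$ (i.e. $\sqrt c\le c_0(\eta/R)\,\delta$) then gives $\partial_\nu w\ge 0\ge -f(w)$ on all of $\Gamma^0_R$, whence $v_c(0,0)\le w(0,0)=\Phi(\sqrt c\,\delta)+\eta<\alpha$ for $c$ small. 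Your fallback flux-balance argument does not close the gap either: integrating $\Delta v_c+c\,\partial_x v_c=0$ over $Q_R^+$ balances $\int_{\Gamma^0_R}f(v_c(x,0))\,dx$ against the advection term \emph{and} the flux through $\Gamma^+_R$, and the latter is not small as $c\to0$ at fixed $R$ --- the Dirichlet data $\Psi_{c,R}$ is normalized to run from $0$ to $1$ for every $c$, so the reaction can be sustained by heat entering through the top and lateral boundaries; indeed at $c=0$ the truncated problem still has a solution, so no contradiction can follow from the smallness of $c$ alone without a quantitative bound on $\int_{\Gamma^+_R}\partial_\nu v_c\,d\sigma$, which you do not provide.
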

Lemma \ref{lem:speed}, together with the continuity of $\vp _c(0,0)$ with respect to $c$ implies that there exists $c\in(0,K]$ such that $\vp _c(0,0)=\alpha$ and thus completes the proof of Proposition \ref{prop:trunc}

\begin{proof}
In this proof, we use some results of the previous section:
We recall that
$$\vphi _{\delta,c}(x,y)= \Phi_c(u^\delta(x,y))= \Phi(\sqrt{c} \, u^\delta(x,y))
$$
solves
$$
\left \{
\begin{array}{lll}
\Delta \vp +c \, \pa_x \vp =0,\,\,\, & \mbox{ in $\RR^2_+$}\\[5pt]
\dd \frac{\partial \vp }{\partial \nu} = -g_{\delta,c}(\vp ) ,\,\,\,& \mbox{ on $\pa\RR^2_+$}.
\end{array}
\right.
$$
Next, for some fixed $\eta<(1-\alpha)/2$, we define 
$$ w(x,y)= (\vphi _{\delta,c}(x,y) -\eta)_+.$$
It is readily seen that $w$ satisfies
$$ \Delta w +c w_x \geq 0 \mbox{ in } \RR^2_+$$
and 
$$  \frac{\partial w}{\partial \nu} \leq 0 = -f(w) \mbox { in $\pa\RR^2_+ \cap \{w=0\}$}.$$
Furthermore, on $\pa\RR^2_+ \cap \{w>0\}$, we have
$$ \frac{\partial w}{\partial \nu} = -g_{\delta,c}(\vphi _{\delta,c}) =-g_{\delta,c}(w+\eta).$$
Lemma \ref{lem:cd}, thus implies
$$  \frac{\partial w}{\partial \nu} \leq -f(w) \quad \mbox { on $\pa\RR^2_+\cap\{w>0\}$}$$
provided $\delta = A/\sqrt c$ and $c\geq K(A)$ ($A$ will be chosen later).

Finally, on $\Gamma^+_R$, we have 
$$ w= (\vphi_{c,\delta}(x,y) -\eta)_+ \leq \vphi_c(x,y) -\eta/2 \leq \Psi_{c,R} (x,y).$$
The first inequality is satisfied provided $\delta$ is small enough (which can be ensured, possibly by requiring $c>K'>K$), while the second inequality is satisfied for large $R$ (using \eqref{eq:psilim}).

The maximum principle thus yields
$$ \vp _c \geq w \mbox{ in } Q_R^+.$$
Finally, we note that 
$$ w(0,0) = (\vphi _{\delta,c}(0,0) -\eta)_+ = (\Phi(\sqrt{c} \, u^\delta(0,0)) -\eta)_+ = (\Phi(\sqrt{c} \, \delta) -\eta)_+= (\Phi(A) -\eta)_+$$
If thus only remains to choose $A$ large enough  so that $\Phi(A) -\eta>\alpha$ (which is possible since  $\eta<(1-\alpha)/2$).

\vspace{20pt}

Next, for some fixed $\eta<\alpha/2$, we define
$$w(x,y)=\vphi _{\delta,c}(x,y) + \frac{\eta}{R}(R-y).$$
Since $y\leq R/2$ in $Q_R^+$, we have (using \eqref{eq:psilim})
$$ w(x,y) \geq \vphi _{\delta,c}(x,y) +\eta/2\geq \vphi _{c}(x,y) +\eta/2\geq \Psi_{c,R} (x,y)\quad \mbox{ on $\Gamma^+_R$},$$
if $R$ is large enough.

Furthermore, it is readily seen that $w$ satisfies
$$ \Delta w +c \,\pa_x w \geq 0 \mbox{ in } \RR^2_+$$
and 
$$  \frac{\partial w}{\partial \nu} = -g_{\delta,c}(w-\eta ) +\eta/R  \mbox { in $\pa\RR^2_+$}.$$
Lemma \ref{lem:cd}, thus implies
$$  \frac{\partial w}{\partial \nu} \geq 0 \geq -f(w) \quad \mbox { on $\pa\RR^2_+$}$$
provided  $\sqrt{c}\leq c_0(\eta/R) \delta$.

We deduce
$$ \vp _c(0,0 ) \leq w(0,0) = \Phi(\sqrt c\, \delta) +\eta .$$
The result follow easily by taking $\delta$ and $c$ small enough (recall that  $\eta<\alpha/2$ and $\Phi(0)=0$).
\end{proof}

\vspace{10pt}

\subsection{Passage to the limit $R\to \infty$}
In order prove Theorem~\ref{thm:1}, we have to pass to the limit $R\rightarrow\infty$ in the truncated problem.

More precisely,  Theorem \ref{thm:1} will follow from the following proposition:
\begin{proposition}\label{prop:limit}
Under the conditions of Proposition \ref{prop:trunc},
there exists a subsequence $R_n\rightarrow \infty$ such that $\vp _{R_n}\longrightarrow \vp _0$ (uniformly on every compact set) and $c_{R_n} \longrightarrow c_0$. The function $v_0$  solves (\ref{eq:TW}), and 
\begin{enumerate}
\item $c_0 \in (0,K]$
\item $x\mapsto v_0 (x,y)$ is non-decreasing (for all $y\geq0$)
\item $y\mapsto v_0 (x,y)$ is non-decreasing (for all $x\in\RR$) 
\item $v_0$ satisfies
$$
\begin{array}{ll}
\vp_0 (x,y)\longrightarrow 0 \qquad & \mbox{ as } x\rightarrow -\infty \\
\vp_0 (x,y)\longrightarrow 1 \qquad & \mbox{ as } x\rightarrow +\infty .
\end{array}
$$
Furthermore, (\ref{eq:asymp}) holds and $\lim_{y\to +\infty} v_0(x,y)=1$.
\end{enumerate}
\end{proposition}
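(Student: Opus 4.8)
The plan is to obtain $\vp_0$ as a locally uniform limit of the truncated solutions $\vp_{R_n}$ and then transfer to $\vp_0$ all the structural properties established for the $\vp_R$, finally pinning down the limits at $x=\pm\infty$ and in $y$ by a barrier argument. First, I would fix any $R_0$ as in Proposition \ref{prop:trunc}. Since $0\le c_R\le K$, a subsequence $c_{R_n}\to c_0\in[0,K]$; since by \eqref{eq:estimate} the $\vp_{R_n}$ are bounded in $\mathcal C^{1,\alpha}(\overline{Q^+_{R_0}})$ uniformly in $n$, a diagonal extraction over an exhaustion $Q^+_{R_0}\subset Q^+_{2R_0}\subset\cdots$ gives a further subsequence with $\vp_{R_n}\to\vp_0$ in $\mathcal C^1_{\mathrm{loc}}(\overline{\RR^2_+})$. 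Passing to the limit in \eqref{eq:trunc} (the boundary term uses $f$ Lipschitz, hence continuous, and the $\mathcal C^1$ convergence up to $\Gamma^0$) shows $\vp_0$ solves \eqref{eq:TW} with speed $c_0$, and $0\le\vp_0\le1$. Monotonicity in $x$ passes to the limit from Proposition \ref{prop:trunc}(3), and the normalization $\vp_{R_n}(0,0)=\alpha$ gives $\vp_0(0,0)=\alpha$ — this is the key fact preventing $\vp_0\equiv0$ or $\equiv1$.

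Next I would show $c_0>0$, which combined with $c_0\le K$ gives item (1). Suppose $c_0=0$; then $\vp_0$ is a bounded, $x$-nondecreasing harmonic function on $\RR^2_+$ with Neumann data $-f(\vp_0)$ on $\Gamma^0$ and $\vp_0(0,0)=\alpha$. Using the Cabré–Solà-Morales device from Lemma \ref{lem:comp} (the function $w=\int_0^y\vp_0\,dz$ and its odd reflection) together with a Liouville/sliding argument in the spirit of \cite{CS,BNqual}, one shows $\vp_0$ must be $x$-independent, i.e. a one-dimensional layer; but an $x$-independent solution of $\partial_\nu\vp=-f(\vp)$ with $\vp$ harmonic and bounded forces $f(\vp_0)\equiv0$ hence $\vp_0\equiv\alpha$, contradicting the limit behavior required below (or, more directly, contradicting that $\vp_0$ is a nonconstant monotone function). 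Alternatively — and this is cleaner here — I would keep a lower barrier: for $c$ small the function $w$ built in the second half of the proof of Lemma \ref{lem:speed} already shows $\vp_{c}(0,0)$ small, so the selected $c_R$ stay bounded away from $0$ uniformly in $R$ once $R\ge R_0$; passing to the limit gives $c_0>0$. I expect this last route to be the most economical.

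For monotonicity in $y$ (item (3)), I would differentiate: the function $\vp_0$ being $x$-monotone and $c_0>0$, note $w:=\int_0^y\vp_0(x,z)\,dz$ solves $\Delta w+c_0\partial_x w=f(\vp_0(x,0))\ge0$ with $w=0$ on $\Gamma^0$ and $w\ge0$; then $\vp_0=\partial_y w$. A cleaner argument: the sliding method in the $y$-direction applied to \eqref{eq:trunc} on $Q^+_R$ — translating $\vp_R(x,y)$ downward in $y$ and using that the lateral/top data $\Psi_{c,R}$ is $y$-nondecreasing together with the fact that the Neumann boundary operator at $y=0$ is preserved under such translations only in the limit — shows $\partial_y\vp_0\ge0$; since the truncated domains have moving top boundary this is one place to be careful, so I would instead prove $y$-monotonicity directly on the limit $\vp_0$ on $\RR^2_+$ by the maximum principle applied to $\vp_0(x,y+h)-\vp_0(x,y)$, using that this difference is harmonic, nonnegative on $\{y=0\}$ after invoking the Neumann condition and monotonicity of $f\circ\vp_0$ along $\Gamma^0$, and has the right sign at infinity.

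Finally, items (2),(4) and the asymptotics: $x$-monotonicity and boundedness give the existence of limits $\vp_0(\pm\infty,y)=:\ell_\pm(y)$, which are themselves solutions of the ($x$-independent) problem; $\ell_-\le\alpha\le\ell_+$ by the normalization, and analyzing the one-dimensional problem $\partial_\nu=-f(\cdot)$ with $f\equiv0$ above $\alpha$ forces $\ell_+\equiv1$ and $\ell_-\equiv0$, giving \eqref{eq:limit}. For the decay estimate \eqref{eq:asymp}, I would compare $1-\vp_0$ on the right half $\{x>0\}$ with $1-\vphi_{c_0}=1-\Phi_{c_0}(u)$, the explicit traveling-wave solution of the free boundary problem from Proposition \ref{prop:FBTW}: since $f\equiv0$ for $u>\alpha$ and $\vp_0\ge\alpha$ on a half-line of $\Gamma^0$, both functions are nonnegative solutions of $\Delta+c_0\partial_x$ with zero Neumann data there, vanish at $x\to+\infty$ and $y\to+\infty$, and a maximum-principle comparison gives $1-\vp_0(x,0)\le 1-\Phi_{c_0}(\sqrt{x})=\frac{2}{\sqrt\pi}\int_{\sqrt{c_0 x}}^\infty e^{-z^2}\,dz$, which is \eqref{eq:asymp}. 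The main obstacle is the argument that $c_0>0$ and that $\vp_0$ is nontrivial — i.e. controlling the speed in the limit — together with the delicate handling of the $y$-monotonicity near the moving top boundary of $Q^+_R$; everything else is a now-standard combination of compactness, sliding, and barrier comparisons.
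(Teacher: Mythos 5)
Your compactness/extraction step and the passage to the limit in the equation match the paper, but the two points you yourself identify as the crux --- $c_0>0$ and the identification of the limits at $x=\pm\infty$ --- are exactly where the proposal has genuine gaps. For $c_0>0$, your preferred ``cleaner route'' misreads the uniformity in Lemma \ref{lem:speed}: the upper barrier $w=\vphi_{\delta,c}+\frac{\eta}{R}(R-y)$ there requires $g_{\delta,c}\le \eta/R$, i.e.\ $\sqrt c\le c_0(\eta/R)\,\delta$, a smallness condition that degenerates as $R\to\infty$ (together with $\Phi(\sqrt c\,\delta)+\eta<\alpha$ it forces roughly $c\lesssim 1/R$). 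So Lemma \ref{lem:speed} only yields $c_R>0$ for each fixed $R$ and gives no lower bound uniform in $R$; the possibility $c_R\to 0$ is precisely the difficulty. The paper handles it by a different mechanism: the energy identity $\int_{\gamma^-}^{\gamma^+} f = c_0\int_{\RR^2_+}|\pa_x \vp_0|^2$ (Lemma \ref{lem:intc}), applied not to $\vp_0$ itself (for which $\gamma^-$ could equal $\alpha$, making the left side zero) but to a recentered limit obtained by translating $\vp_n$ so that its value at the origin is $\alpha/2$; this forces the new left limit to be $0$, hence $\int f>0$, hence $c_0>0$, with a separate case when the recentering points stay at bounded distance from the left edge of $Q^+_{R_n}$. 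Your alternative route (assume $c_0=0$, conclude $\vp_0$ is $x$-independent, contradict ``the limit behavior required below'') is both vague and circular, since your own derivation of that limit behavior is flawed, as follows.

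Monotonicity in $x$ plus boundedness only gives limits $\gamma^\pm$ with $f(\gamma^\pm)=0$, i.e.\ $\gamma^-\in\{0\}\cup[\alpha,1]$, $\gamma^+\in[\alpha,1]$; the one-dimensional analysis you invoke cannot ``force $\ell_+\equiv1$ and $\ell_-\equiv0$'', because every constant in $[\alpha,1]$ is admissible for $\ell_+$ (and $\alpha$ for $\ell_-$). The paper gets $\gamma^+=1$ only through the lower barrier built at the \emph{truncated} level: $\vphi_{c_n}-\eta(R_n)\le \Psi_{c_n,R_n}=v_n$ on $\Gamma^+_{R_n}$, together with $\pa_\nu v_n=0$ on $\{x>0,\,y=0\}$ (since $v_n\ge\alpha$ there), giving $v_n\ge \vphi_{c_n}-\eta(R_n)$ and in the limit $\vp_0\ge\vphi_{c_0}$ --- which simultaneously yields $\gamma^+=1$, the decay estimate \eqref{eq:asymp}, and (after Liouville) $\gamma^-=0$; note this is only useful because $c_0>0$ has already been proved. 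Your proposed quarter-plane comparison of $1-\vp_0$ with $1-\vphi_{c_0}$ cannot replace this, since it presupposes boundary control on $\{x=0\}$ and decay of $1-\vp_0$ as $x,y\to\infty$, none of which is known at that stage. Finally, your $y$-monotonicity argument is muddled: the sign of $\vp_0(x,y+h)-\vp_0(x,y)$ on $\{y=0\}$ is not given by the Neumann condition (that is essentially what is to be proved); the correct short argument, as in the paper, is that $w=\pa_y\vp_0$ is a bounded solution of $\Delta w+c_0\pa_x w=0$ with \emph{Dirichlet} data $w(x,0)=f(\vp_0(x,0))\ge0$, whence $w\ge0$ by the maximum principle in the half-plane.
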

This section is devoted to the proof of this proposition.
\vspace{10pt}

First, we recall that $ c_R \in (0,K]$, and so Proposition \ref{prop:trunc} (1) implies that there exists a subsequence $R_n\rightarrow \infty$ such that 
$$ 
\begin{array}{l}
c_{n} :=c_{R_n} \longrightarrow c_0\in[0,K]\\[5pt]
 \vp _n := \vp _{R_n}  \longrightarrow \vp _0 \quad \mbox{ uniformly on every compact set}
 \end{array}
 $$
as $n\rightarrow \infty$. 
It is readily seen that $\vp _0 \in \mathcal C^{1,\alpha}(\RR^2_+) $ solves
\begin{equation}\label{eq:nontrunc}
\left \{
\begin{array}{lll}
\Delta \vp +c_0 \pa_x \vp =0,\,\,\, & \mbox{ in $\RR^2_+ $}\\[5pt]
\dd \frac{\partial \vp }{\partial \nu} = -f(\vp ),\,\,\,& \mbox{ on $\pa\RR^2_+$} 
\end{array}
\right.
\end{equation}

Furthermore, we can show:
\begin{lemma}\label{lem:grad}
There exists $C$ such that
$$ |\na v_0 (x,y)| \leq C \qquad \mbox{ for all $(x,y)\in \RR^2_+$}.$$
\end{lemma}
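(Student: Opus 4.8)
\emph{Proof of Lemma~\ref{lem:grad} (sketch).}
The plan is to combine standard interior elliptic estimates, which handle points lying at a fixed distance from the boundary, with the Cabr\'e--Sol\`a-Morales device already used in Lemma~\ref{lem:comp}, which handles a fixed neighborhood of $\pa\RR^2_+$. The only point needing attention is that all the bounds involved are invariant under translation in the $x$-variable, so the resulting estimates are uniform in $\RR^2_+$.

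First, for points away from the boundary: if $(x_0,y_0)\in\RR^2_+$ with $y_0\ge \tfrac12$, then $B_{1/4}(x_0,y_0)\subset\RR^2_+$, and $v_0$ solves the uniformly elliptic equation $\Delta v_0+c_0\pa_x v_0=0$ there, with $|c_0|\le K$ and $0\le v_0\le 1$. Interior gradient estimates then give $|\na v_0(x_0,y_0)|\le C$, with $C$ depending only on $K$ and not on $(x_0,y_0)$.

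Second, fix $(x_0,y_0)$ with $0\le y_0\le\tfrac12$. As in Lemma~\ref{lem:comp}, set $w(x,y)=\int_0^y v_0(x,z)\,dz$, so that $|w|\le |y|$, $w$ solves $\Delta w+c_0\pa_x w=f(v_0(x,0))$ in the half-ball $B_2^+=B_2(x_0,0)\cap\RR^2_+$ with $w=0$ on $B_2(x_0,0)\cap\{y=0\}$, and its odd reflection $\bar w$ (in $y$) solves $\Delta\bar w+c_0\pa_x\bar w=g$ in $B_2(x_0,0)$, where $|g|\le\|f\|_{L^\infty([0,1])}$. Since $\|\bar w\|_{L^\infty(B_2(x_0,0))}\le 2$, the Calder\'on--Zygmund estimate bounds $\bar w$ in $W^{2,p}(B_1(x_0,0))$, hence in $\mathcal C^{1,\alpha}(B_1(x_0,0))$, by a constant depending only on $p$, $K$ and $\|f\|_{L^\infty}$ --- in particular not on $x_0$. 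Consequently $v_0=\pa_y\bar w$ is bounded in $\mathcal C^{0,\alpha}(\overline{B_1^+})$ uniformly in $x_0$, so, $f$ being Lipschitz, $x\mapsto f(v_0(x,0))$ is bounded in $\mathcal C^{0,\alpha}$ uniformly in $x_0$. Boundary Schauder estimates for the Dirichlet problem satisfied by $w$ on the flat portion $\{y=0\}$ then yield a bound for $w$ in $\mathcal C^{2,\alpha}(\overline{B^+_{1/2}(x_0,0)})$, again uniform in $x_0$; differentiating, $v_0=\pa_y w$ is bounded in $\mathcal C^{1,\alpha}(\overline{B^+_{1/2}(x_0,0)})$ uniformly in $x_0$, whence $|\na v_0(x_0,y_0)|\le C$ whenever $0\le y_0\le\tfrac12$.

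Since points with $y_0\ge\tfrac12$ are covered by the first step and points with $0\le y_0\le\tfrac12$ by the second, combining the two estimates proves the lemma. I expect no serious obstacle: the argument is exactly that of Lemma~\ref{lem:comp} carried out on balls of fixed size, and the uniformity of the constants in $x$ is automatic because the equation, the bounds $0\le v_0\le 1$ and $|c_0|\le K$, and the resulting bound on $f(v_0(\cdot,0))$ are all translation-invariant. \eproof
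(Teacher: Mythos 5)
Your proof is correct and follows essentially the same route as the paper: interior gradient estimates away from $\pa\RR^2_+$, plus the Cabr\'e--Sol\`a-Morales antiderivative/reflection device near the boundary, with uniformity in $x$ from translation invariance. The paper merely abbreviates the second step by invoking the uniform $\mathcal C^{1,\alpha}$ bound \eqref{eq:estimate} of Proposition \ref{prop:trunc} (i.e.\ Lemma \ref{lem:comp}) on translated rectangles $Q^+_1(a,0)$, which is exactly the computation you carry out directly for $v_0$.
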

\begin{proof}
Indeed, Proposition \ref{prop:trunc} (1) (with $R_0=2$) gives $|\na v_0|\leq C$ is $Q^+_1(a,0)$ for all $a$ and so $|\na v_0|\leq C$ in $\{0\leq y\leq 1\}$. Interior gradient estimates and the fact that $||v_0||_{L^\infty} \leq 1$ gives the result.
\end{proof}
Note that interior gradient estimates (in $B_t(x,t)$) also yield
$$ |\na u (x,t)| \leq \frac{1}{t},$$
and so $|\na u(x,y)|\to0$ as $y\to \infty$. 

Proposition \ref{prop:trunc} (3) implies that $x\mapsto v_0(x,y)$ is non-decreasing with respect to $x$ (for all $y$), and we can show:
\begin{lemma}
The function $y\mapsto v_0(x,y) $ is non-decreasing with respect to $y$.
\end{lemma}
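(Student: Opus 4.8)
The plan is to establish monotonicity in $y$ for the truncated solutions $\vp_R$ first, uniformly in $R$, and then pass to the limit. The natural tool is again the sliding method of \cite{BNplane,BNqual}, but applied in the $y$-direction rather than the $x$-direction; the key point is that the boundary data and the geometry are compatible with sliding upward. Concretely, I would fix $R$ and compare $\vp_R(x,y)$ with its vertical translates $\vp_R(x,y+\tau)$ for $\tau>0$ inside the slab $\{0<y<R^{1/4}-\tau\}$.

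First I would record the three facts that make the comparison work. On the bottom boundary $\Gamma^0_R$, both $\vp_R(x,y)$ and the translate satisfy $\partial_\nu \vp = -f(\vp)$, so after the odd/even reflection trick (or directly, using that $f$ is Lipschitz) the difference $\vp_R(x,y+\tau)-\vp_R(x,y)$ satisfies a linear elliptic inequality with a bounded zeroth-order coefficient and a one-sided Neumann-type condition on $\{y=0\}$ — exactly the setup in which the maximum principle and Hopf lemma apply. On the lateral boundaries $x=\pm R$ we have $\vp_R\equiv 0$ and $\vp_R\equiv 1$ respectively (by \eqref{eq:psi01}), and since $0\le \vp_R\le 1$ the translate dominates $\vp_R$ at $x=-R$ and is dominated by $1=\vp_R$ at $x=R$ — wait, more carefully, at $x=-R$ the translate $\vp_R(-R,y+\tau)$ need not be $0$; here one uses instead that $\Psi_{c,R}$, hence $\vp_R$ restricted to $\Gamma^+_R$, inherits the monotonicity in $y$ of $\vphi_c$, so that on the \emph{top} boundary $y=R^{1/4}$ and on the part of $\Gamma^+_R$ that is relevant the ordering $\vp_R(x,y)\le \Psi_{c,R}(x,y+\tau)$ holds. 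The clean way to organize this is: the comparison function $\underline{v}(x,y):=\min(\vp_R(x,y),1)$ vs.\ $\overline{v}(x,y):=\vp_R(x,y+\tau)$ on the region where both are defined, noting $\Psi_{c,R}$ is non-decreasing in $y$ on $\Gamma^+_R$, so the boundary inequality $\overline v\ge \underline v$ holds on the whole parabolic-type boundary of the shifted slab.

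Then I would run the standard sliding argument: start with $\tau$ close to $R^{1/4}$ (where the shifted slab is thin and the inequality $\vp_R(x,y+\tau)\ge \vp_R(x,y)$ holds trivially because the left side is close to the top data, which dominates), and decrease $\tau$ to $0$. If the ordering ever failed for some intermediate $\tau$, a first touching point would occur either in the interior or on $\Gamma^0_R$; the strong maximum principle rules out an interior touching point, and the Hopf boundary lemma (together with the monotone structure of the Neumann condition, since $f$ Lipschitz gives $f(\vp_R(x,y+\tau))-f(\vp_R(x,y)) \ge -L(\vp_R(x,y+\tau)-\vp_R(x,y))$ on the contact set) rules out a touching point on $\{y=0\}$. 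Hence $\vp_R(x,y+\tau)\ge\vp_R(x,y)$ for all $\tau\in(0,R^{1/4}]$, i.e. $y\mapsto\vp_R(x,y)$ is non-decreasing, with a bound independent of $R$. Passing to the limit along the subsequence $R_n\to\infty$ and using the uniform-on-compacts convergence $\vp_{R_n}\to v_0$ gives that $y\mapsto v_0(x,y)$ is non-decreasing.

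The main obstacle is the boundary condition on $\Gamma^+_R$: unlike the pure interior sliding method, the top and lateral data $\Psi_{c,R}$ are not translation-invariant in $y$, so one must verify carefully that $\Psi_{c,R}(x,y+\tau)\ge \Psi_{c,R}(x,y)$ (respectively $\ge \vp_R(x,y)$) on the overlap of the slab and its shift — this uses that $\vphi_c(x,y)=\Phi_c(u(x,y))$ is non-decreasing in $y$, which in turn follows from \eqref{eq:der} since $u_y=\tfrac12\rho^{-1/2}\sin(\theta/2)\ge0$ for $\theta\in[0,\pi]$, together with the fact that the truncation $\sup(\inf(\cdot,1),0)$ preserves monotonicity. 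A secondary technical point is handling the Neumann condition at the touching analysis; this is exactly the reflection argument already used in Lemma \ref{lem:comp}, so it can be invoked rather than redone. Everything else is the textbook sliding argument.
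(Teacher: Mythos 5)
Your strategy has a structural flaw: the problem is not invariant under vertical translations, because the nonlinear Neumann condition lives only on $\{y=0\}$. Your claim that ``both $\vp_R(x,y)$ and the translate satisfy $\partial_\nu \vp=-f(\vp)$ on $\Gamma^0_R$'' is false: the translate $\overline v(x,y)=\vp_R(x,y+\tau)$ has normal derivative $-\pa_y\vp_R(x,\tau)$ at $y=0$, about which nothing is known (it is an interior quantity, not governed by the boundary condition). Consequently the touching-point analysis at the bottom does not close: at a first contact point $(x_0,0)$ one would need $\pa_y\vp_R(x_0,\tau)\le f(\vp_R(x_0,\tau))$ to contradict Hopf's lemma, and that is essentially the statement you are trying to prove. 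A second gap is on the top of the shifted slab: there you must compare the \emph{interior} value $\vp_R(x,R^{1/4}-\tau)$ with the Dirichlet datum $\Psi_{c,R}(x,R^{1/4})$; monotonicity of $\Psi_{c,R}$ in $y$ only orders the data against itself, not against interior values of the solution, so neither the ordering for intermediate $\tau$ nor the starting point of the sliding (for $\tau$ close to $R^{1/4}$) is justified. In fact monotonicity in $y$ of the truncated solution $\vp_R$ is genuinely doubtful near the top boundary, since $\pa_y\vp_R$ is not controlled on $\{y=R^{1/4}\}$, and the paper never claims it.

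The paper's proof avoids all of this by working directly with the limit $v_0$ on the full half-plane: set $w=\pa_y v_0$; then $w$ solves $\Delta w+c_0\pa_x w=0$ in $\RR^2_+$ and, by the boundary condition, $w=f(v_0)\ge 0$ on $\{y=0\}$ (a \emph{Dirichlet} datum for $w$). Since $w$ is bounded (Lemma \ref{lem:grad}) and continuous up to the boundary (Lemma \ref{lem:comp}), a Phragm\'en--Lindel\"of/maximum principle argument in the half-plane (e.g.\ comparison with $My/T$ on strips $\{0<y<T\}$, then $T\to\infty$) gives $w\ge 0$, hence the monotonicity. If you want to salvage a sliding-type approach you would have to slide the limit function $v_0$ itself and still face the non-invariance of the Neumann condition at $y=0$; differentiating in $y$ and using the sign of $f$ is both shorter and avoids proving any monotonicity for the truncated problems.
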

\begin{proof}
We note that $w=\pa_yv_0$ is solution of 
$$\left \{
\begin{array}{lll}
\Delta w +c_0 \pa_x w =0,\,\,\, & \mbox{ in $\RR^2_+ $}\\[5pt]
\dd w  = f(\vp )\geq 0 ,\,\,\,& \mbox{ on $\pa\RR^2_+$.} 
\end{array}
\right.
$$
Using the fact that $w=\pa_yv_0$ is bounded in $\RR^2_+$ (Lemma \ref{lem:grad}) and continuous up to the boundary (Lemma \ref{lem:comp}),
we deduce that $w=\pa_yv_0\geq 0 $ in $\RR^2_+$, hence the lemma.  
\end{proof}

We now have to show that $c_0>0$ and that $v_0$  has the appropriate limiting behavior as $x\to\pm\infty$.

We start with the following lemma, which is reminiscent of Lemma 3.3 in \cite{CS}:
\begin{lemma}\label{lem:limit}
There exists $\gamma^+$, $\gamma^-$  such that 
\begin{equation}\label{eq:lim0}
\lim_{x\rightarrow \pm\infty}\vp _0(x,0) = \gamma^\pm
\end{equation}
with
\begin{equation}\label{eq:limineq}
0\leq \gamma^-\leq \alpha\leq\gamma^+\leq 1.
\end{equation}
Furthermore, for all $R>0$, 
\begin{equation}\label{eq:limunif}
 ||\vp _0-\gamma^\pm||_{L^\infty(Q_R^+(x,0))} \longrightarrow 0 \qquad \mbox{ as } x\to\pm\infty
 \end{equation}
and 
\begin{equation}\label{eq:limgrad}
 ||\nabla \vp _0||_{L^\infty(Q_R^+(x,0))} \longrightarrow 0 \qquad \mbox{ as } x\to\pm\infty.
 \end{equation}
 Finally, 
 \begin{equation}\label{eq:limf}
 f(\gamma^+)=f(\gamma^-)=0.
 \end{equation}
\end{lemma}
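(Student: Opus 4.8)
The plan is to follow the scheme of Lemma~3.3 in \cite{CS}, combining the monotonicity in $x$ with the uniform gradient bound of Lemma~\ref{lem:grad} and the boundary regularity coming from the Cabr\'e--Sol\`a-Morales integration trick.

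\medskip

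\emph{Step 1 (existence of the pointwise limits and the ordering).} Passing to the limit in Proposition~\ref{prop:trunc}~(3) shows that $x\mapsto\vp_0(x,y)$ is non-decreasing for every $y\ge0$, and $0\le\vp_0\le1$; hence the monotone limits $\gamma^\pm(y):=\lim_{x\to\pm\infty}\vp_0(x,y)$ exist and lie in $[0,1]$, which already gives \eqref{eq:lim0}. Moreover, since $\vp_{R_n}(0,0)=\alpha$ and $\vp_{R_n}\to\vp_0$ locally uniformly, we get $\vp_0(0,0)=\alpha$, and the monotonicity of $x\mapsto\vp_0(x,0)$ yields $\gamma^-\le\alpha\le\gamma^+$; together with $\gamma^\pm\in[0,1]$ this is \eqref{eq:limineq}.

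\medskip

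\emph{Step 2 (compactness of the translates).} Consider the $x$-translates $\vp_0^{\,a}(x,y):=\vp_0(x+a,y)$. By Lemma~\ref{lem:grad} they are uniformly Lipschitz on $\overline{\RR^2_+}$, and applying the argument of Lemma~\ref{lem:comp} directly to $\vp_0$ (equation \eqref{eq:nontrunc} is invariant under translation in $x$, so the odd reflection of $\int_0^y\vp_0(x,z)\,dz$ is controlled in $\mathcal C^{2,\alpha}$ uniformly in $a$), the family $\{\vp_0^{\,a}\}$ is bounded in $\mathcal C^{1,\alpha}$ up to $\{y=0\}$; with interior estimates this makes it precompact in $\mathcal C^1_{loc}(\overline{\RR^2_+})$. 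If $a_n\to+\infty$ and $\vp_0^{\,a_n}\to V$ in $\mathcal C^1_{loc}$, then for each fixed $(x,y)$ one has $\vp_0^{\,a_n}(x,y)=\vp_0(x+a_n,y)\to\gamma^+(y)$, so $V(x,y)=\gamma^+(y)$ is independent of $x$. Since the limit does not depend on the subsequence, in fact $\vp_0^{\,a}\to\gamma^+(\cdot)$ in $\mathcal C^1_{loc}(\overline{\RR^2_+})$ as $a\to+\infty$, which is \eqref{eq:limunif} for the $+$ sign (and will give \eqref{eq:limgrad} once $\gamma^+$ is known to be constant); the case $a\to-\infty$ is identical.

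\medskip

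\emph{Step 3 (the profile is constant and $f$ vanishes on it).} The function $(x,y)\mapsto\gamma^+(y)$ inherits from \eqref{eq:nontrunc} the equation $\Delta\cdot+c_0\pa_x\cdot=0$ in $\RR^2_+$, which forces $(\gamma^+)''\equiv0$, so $\gamma^+(y)=\gamma^+(0)+by$; since $0\le\gamma^+(y)\le1$ for all $y\ge0$ we must have $b=0$, i.e. $\gamma^+$ is the constant $\gamma^+(0)=:\gamma^+$, whence $\nabla V\equiv0$ and \eqref{eq:limgrad} follows. The boundary condition $\pa V/\pa\nu=-f(V)$ on $\{y=0\}$ now reads $(\gamma^+)'(0)=f(\gamma^+)$, so $f(\gamma^+)=0$; the same argument at $-\infty$ gives $f(\gamma^-)=0$, which is \eqref{eq:limf}. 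Combined with Step~1 this proves the lemma.

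\medskip

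The step I expect to be the main obstacle is Step~2: one has to secure convergence of the translates in $\mathcal C^1$ \emph{up to the boundary} $\{y=0\}$, since plain equicontinuity from Lemma~\ref{lem:grad} only yields $\mathcal C^0$ convergence — this is exactly where the integrated function $\int_0^y\vp_0\,dz$ and its odd reflection are needed, with bounds uniform in the translation parameter — and one must check that passage to the limit in the nonlinear boundary condition is legitimate, which is immediate since $f$ is (Lipschitz) continuous.
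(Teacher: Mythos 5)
Your proof is correct, and its overall scheme is the one the paper uses (and attributes to Lemma 3.3 of \cite{CS}): monotonicity in $x$ plus $0\le\vp_0\le1$ and $\vp_0(0,0)=\alpha$ give \eqref{eq:lim0}--\eqref{eq:limineq}, and the uniform statements \eqref{eq:limunif}--\eqref{eq:limgrad} are obtained by compactness of the $x$-translates, with the $\mathcal C^{1,\alpha}$-up-to-boundary bounds coming from the integrated function $\int_0^y\vp_0\,dz$ and Lemma \ref{lem:comp}, exactly as in the paper. The only place where you genuinely diverge is the rigidity step identifying the subsequential limits: the paper argues by contradiction, uses only the trace at $y=0$ (namely $\vp_1^\pm=\gamma^\pm$ on $\Gamma_0$), and then invokes Liouville's theorem for $\Delta+c_0\pa_x$ in $\RR^2$ after odd reflection of $\vp_1^\pm-\gamma^\pm$; you instead observe that the monotone limit exists at \emph{every} height $y$, so any limit of translates is automatically $x$-independent, and the equation reduces to $(\gamma^\pm)''=0$ on $y>0$, which with boundedness forces constancy — a slightly more elementary route that avoids both the reflection and the Liouville theorem, at the cost of using the full family of limits $\gamma^\pm(y)$ rather than just the boundary trace. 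Both arguments then read off $f(\gamma^\pm)=0$ from the Neumann condition, which is legitimate precisely because the translates converge in $\mathcal C^1$ up to $\{y=0\}$, the point you correctly flag as the technical crux.
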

\begin{proof}
The existence of $\gamma^+$ and $\gamma^-$  
follows from the fact that $x\mapsto \vp _0(x,y)$ is monotone increasing and bounded by $0$ and $1$.
The fact that $\vp _0(0,0)=\alpha$ implie~\eqref{eq:limineq}.

Next, we see that \eqref{eq:limunif} and  \eqref{eq:limgrad} are equivalent to 
$$ 
 ||\vp _0^t-\gamma^\pm||_{L^\infty(Q_R^+(0,0))}+ ||\nabla \vp _0^t||_{L^\infty(Q_R^+(0,0))}\longrightarrow 0 \qquad \mbox{ as } t\to\pm\infty$$
 where 
$$ \vp _0^t(x,y) = \vp _0(x+t,y).$$
The proof then follows from a simple compactness argument:
We assume that
$$ 
\liminf_{n\to\infty } ||\vp _0^{t_n}-\gamma^\pm||_{L^\infty(Q_R^+(0,0))}+ ||\nabla \vp _0^{t_n}||_{L^\infty(Q_R^+(0,0))}\geq \eps>0 \qquad \mbox{ as } n\to \infty$$
for some sequence $t_n\to \infty$.
Then Lemma \ref{lem:comp} gives  $\mathcal C^{1,\alpha}$ estimates on $\vp _0^{t_n}$ in $Q_R^+(0,0)$ and therefore a subsequence of $\vp _0^{t_n}$ converges to some function $\vp _1^\pm$ satisfying
$$ 
||\vp _1^\pm-\gamma^\pm||_{L^\infty(Q_R^+(0,0))}+ ||\nabla \vp _1^\pm||_{L^\infty(Q_R^+(0,0))}\geq \eps
$$
However, by \eqref{eq:lim0}, we have 
$ \vp _1^\pm = \gamma^\pm$ on $\Gamma_0$, and since $\vp _1^\pm$ is a bounded solution of  
$$ \Delta v+c_0\pa_x v  = 0 \quad \mbox{ in } \RR^2_+,$$
we deduce that
$\vp _1^\pm=\gamma^{\pm}$ in all of $\RR^2_+$ 
(this is a consequence of Liouville's theorem, after extending the function $\vp _1^\pm-\gamma^{\pm}$ to $\RR^2$ by an odd reflection).
This is a contradiction with the inequality above.

Finally, this argument also implies that $\gamma^\pm$ is a solution of  \eqref{eq:nontrunc} which gives~\eqref{eq:limf}.
\end{proof}

\vspace{10pt}

The crucial step in the proof of Proposition \ref{prop:limit} is now to prove that $c_0>0$. This will be in particular a consequence of the following lemma:
\begin{lemma}\label{lem:intc}
Let $\vp _0$ be a  solution of \eqref{eq:nontrunc}. Then $\pa_x \vp _0 \in L^2(\RR^2_+)$ and the following equality holds:
\begin{equation}\label{eq:intc}
\int_{\gamma_-}^{\gamma_+} f(s)\, ds =  c_0 \int_{\RR^2_+} |\pa_x \vp _0|^2 \, dx\, dy
\end{equation}
\end{lemma}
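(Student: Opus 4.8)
The plan is to prove \eqref{eq:intc} by the classical traveling‑wave energy identity: multiply the equation by $\partial_x\vp_0$, recognize the product as a divergence, and integrate over the half‑strips $\Sigma_R:=(-R,R)\times\RR_+$, letting $R\to\infty$. For the algebra, set $e_1=(1,0)$ and
\[
X:=\tfrac12\,e_1\,|\na\vp_0|^2-(\partial_x\vp_0)\,\na\vp_0 .
\]
From $\mathrm{div}(\na\vp_0\,\partial_x\vp_0)=\Delta\vp_0\,\partial_x\vp_0+\tfrac12\partial_x|\na\vp_0|^2$ and the equation $\Delta\vp_0=-c_0\,\partial_x\vp_0$ one obtains $\mathrm{div}\,X=c_0\,(\partial_x\vp_0)^2$ in $\RR^2_+$. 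I would also rewrite the boundary condition $\frac{\partial\vp_0}{\partial\nu}=-f(\vp_0)$ on $\{y=0\}$ as $\partial_y\vp_0(x,0)=f(\vp_0(x,0))$, and introduce the bounded primitive $F(s):=\int_0^s f(\sigma)\,d\sigma$.

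Next I would integrate $\mathrm{div}\,X=c_0(\partial_x\vp_0)^2$ over $\Sigma_R$. Since $\vp_0$ is smooth in $\RR^2_+$ and $\mathcal C^1$ up to $\{y=0\}$ (Lemma~\ref{lem:comp}), while $|\na\vp_0|\le C$ (Lemma~\ref{lem:grad}) and $|\na\vp_0(x,y)|\le C/y$ by interior gradient estimates, an approximation (on $(-R,R)\times(\varepsilon,H)$, then $\varepsilon\to0$ and $H\to\infty$) justifies the divergence theorem on $\Sigma_R$, the flux through $\{y=H\}$ being $O(R/H^2)\to0$. The remaining fluxes of $X$ are: through $\{y=0\}$ (outward normal $(0,-1)$) the flux equals $\partial_x\vp_0\,\partial_y\vp_0=f(\vp_0)\,\partial_x\vp_0=\partial_x\!\big(F(\vp_0(x,0))\big)$, which integrates to $F(\vp_0(R,0))-F(\vp_0(-R,0))$; through $\{x=\pm R\}$ (outward normals $(\pm1,0)$) the flux equals $\pm\tfrac12\big((\partial_y\vp_0)^2-(\partial_x\vp_0)^2\big)$. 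Writing $g:=(\partial_y\vp_0)^2-(\partial_x\vp_0)^2$, this gives
\[
c_0\int_{\Sigma_R}(\partial_x\vp_0)^2\,dx\,dy \;=\; F(\vp_0(R,0))-F(\vp_0(-R,0)) \;+\; \tfrac12\int_0^{\infty}\big[g(R,y)-g(-R,y)\big]\,dy .
\]

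Then I would let $R\to\infty$. By \eqref{eq:lim0} and continuity of $F$, the boundary term tends to $F(\gamma_+)-F(\gamma_-)=\int_{\gamma_-}^{\gamma_+}f(s)\,ds$. For the lateral term, $\big|\int_0^\infty g(\pm R,y)\,dy\big|\le\int_0^\infty|\na\vp_0(\pm R,y)|^2\,dy$, which is bounded uniformly in $R$ (by the two gradient bounds) and in fact tends to $0$: split $\int_0^\infty=\int_0^Y+\int_Y^\infty$, bound the tail by $\int_Y^\infty(C/y)^2\,dy\le C^2/Y$, and send $\int_0^Y|\na\vp_0(\pm R,y)|^2\,dy\to0$ as $R\to\infty$ using the uniform far‑field decay \eqref{eq:limgrad}. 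Passing to the limit, with $\int_{\Sigma_R}(\partial_x\vp_0)^2\uparrow\int_{\RR^2_+}(\partial_x\vp_0)^2$ by monotone convergence, yields
\[
c_0\int_{\RR^2_+}(\partial_x\vp_0)^2\,dx\,dy \;=\; \int_{\gamma_-}^{\gamma_+}f(s)\,ds .
\]
Moreover the right‑hand side of the pre‑limit identity is bounded uniformly in $R$, so when $c_0>0$ we get $\int_{\Sigma_R}(\partial_x\vp_0)^2\le C/c_0$, hence $\partial_x\vp_0\in L^2(\RR^2_+)$; in the degenerate case $c_0=0$ the identity forces $\int_{\gamma_-}^{\gamma_+}f=0$ and the statement holds trivially.

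The step I expect to be the main obstacle is the vanishing of the lateral boundary terms as $R\to\infty$. The far‑field decay $\na\vp_0\to0$ from Lemma~\ref{lem:limit} is available only on bounded $y$‑strips, so it must be combined with the interior estimate $|\na\vp_0|\le C/y$ — which handles large $y$ — by fixing the height $Y$ first and only then letting $R\to\infty$. Everything else (the divergence identity, the flux computation, the control of the flux at $\{y=H\}$, and the limit of the bottom term via \eqref{eq:lim0}) is routine.
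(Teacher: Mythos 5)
Your proposal is correct and takes essentially the same route as the paper: multiply the equation by $\partial_x \vp_0$, use the divergence structure, turn the bottom flux into $F(\vp_0(\pm R,0))$ via the boundary condition, and make the remaining boundary terms vanish using \eqref{eq:limgrad} together with the interior decay $|\nabla \vp_0(x,y)|\le C/y$. The only differences are bookkeeping (the paper integrates first in $y$ over a finite height and uses $\partial_x\vp_0,\partial_y\vp_0\ge 0$ to control the top flux, whereas you integrate over the full half-strip and invoke the $O(1/y)$ bound), and, exactly as in the paper, the $L^2$ conclusion is effectively obtained only when $c_0>0$, which is all that is ever used.
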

\begin{proof}
Again, the idea of the proof comes from \cite{CS} (though in that paper, it is assumed that $c_0=0$).
We write $v=v_0$.
Then, multiplying the equation by $v_x$ leads to
$$v_{xx} v_x  + v_{yy} v_x  +c_0 (v_x) ^2 = 0 $$
which can be rewritten as
\begin{equation}\label{eq:cs1}
 \pa_{x} (\frac{1}{2} (v_x)^2) + \pa_{y}(v_y v_x ) - \pa_{x}  (\frac{1}{2} (v_y )^2 )  +c_0 (v_x)^2 = 0 
\end{equation}
Integrating with respect to $y\in(0,R)$, we deduce:
\begin{equation}\label{eq:cs2}
\frac{ d}{dx}\int_0^R \frac{1}{2} [(v_x )^2 -(v_y)^2 ] \, dy -v_y(x,0)v_x (x,0 ) +v_y(x,R)v_x (x,R )   +c_0 \int_0^R (v_x )^2 \, dy = 0 
\end{equation}
and so, defining  $F'=f$, we get
$$
\frac{ d}{dx} \int_0^R \frac{1}{2}  [(v_x )^2 -(v_y)^2 ]   \, dy -\frac{ d}{dx} F( v(x,0 ))   +v_y (x,R)v_x (x,R ) +c_0 \int_0^R (v_x)^2 \, dy = 0 .
$$
which we rewrite as 
\begin{equation}\label{eq:cs3}
 v_y (x,R)v_x (x,R ) +c_0 \int_0^R (v_x)^2 \, dy  = - \frac{ d}{dx} \int_0^R \frac{1}{2}  [(v_x )^2 -(v_y)^2 ]   \, dy + \frac{ d}{dx} F( v(x,0 ))  .
\end{equation}

Using \eqref{eq:limgrad}, we see that 
$$ \int_0^R  \frac{1}{2} [(v_x )^2 -(v_y)^2 ]     \, dy \longrightarrow 0 \quad \mbox{ as } x\rightarrow \pm\infty$$
and so the right hand side in \eqref{eq:cs3} is integrable with respect to $x\in \RR$.
Since the left hand side is non-negative (recall that $u_x\geq 0$ and $u_y\geq 0$), we deduce that it is also integrable, and 
$$ \int_{\RR} v_y (x,R)v_x (x,R )\, dx + c_0 \int_{\RR}\int_0^R  |v_x| ^2 \, dy\, dx =  F(\gamma_+)  -F( \gamma_-) = \int_{\gamma_-}^{\gamma_+} f(s)\, ds .$$
Passing to the limit $R\to\infty$, we deduce that $v_x  \in L^2(\RR^2_+)$  and
 obtain (\ref{eq:intc}).
\end{proof}

We can now show:
\begin{lemma}
The limiting speed $c_0$ satisfies
$$c_0>0$$
\end{lemma}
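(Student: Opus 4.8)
The plan is to argue by contradiction. Suppose $c_0=0$. Then the identity \eqref{eq:intc} from Lemma \ref{lem:intc} forces
$$\int_{\gamma_-}^{\gamma_+} f(s)\, ds = 0,$$
and since $f\geq 0$ and $f>0$ on $(0,\alpha)$, this means the interval $(\gamma_-,\gamma_+)$ must avoid $(0,\alpha)$ up to a null set; combined with $0\leq\gamma_-\leq\alpha\leq\gamma_+\leq1$ from \eqref{eq:limineq}, the only possibility is $\gamma_-=\gamma_+=\alpha$. In other words, if $c_0=0$ then $v_0(x,0)\to\alpha$ as $x\to\pm\infty$, and by \eqref{eq:limunif} in fact $v_0\to\alpha$ uniformly on translated boxes at both ends.

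The goal is then to derive a contradiction from the fact that $v_0$ is a globally defined, nondecreasing-in-$x$, nondecreasing-in-$y$ solution of $\Delta v_0=0$ in $\RR^2_+$ with $\partial_\nu v_0=-f(v_0)$ on the boundary, which is squeezed between two states both equal to $\alpha$ while passing through the value $\alpha$ at the origin. First I would use the monotonicity in $x$ together with $\gamma_-=\gamma_+=\alpha$ to conclude $v_0(x,0)\equiv\alpha$ along the whole boundary line $y=0$ (a nondecreasing function with equal limits at $\pm\infty$ is constant), hence $f(v_0(x,0))\equiv0$, so $v_0$ is a bounded harmonic function in $\RR^2_+$ with vanishing Neumann data on the boundary; by Liouville (extend by even reflection across $y=0$ to a bounded harmonic function on $\RR^2$) this gives $v_0\equiv\alpha$ everywhere. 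But then using the regularized barriers from Section \ref{sec:statreg} --- specifically the subsolution construction in the proof of Lemma \ref{lem:speed}(1), which was shown to satisfy $v_{c}\geq w$ with $w(0,0)=(\Phi(A)-\eta)_+>\alpha$ for all $R\geq R_0$ and $c\geq K$ --- one sees $v_0$ cannot be identically $\alpha$: more carefully, one passes this lower barrier to the limit. Actually the cleanest route is: the barrier argument in Lemma \ref{lem:speed} shows that for every fixed large $c$ the truncated solution $v_c$ satisfies $v_c(x,0)\to$ something $>\alpha$ as $x\to+\infty$ uniformly, giving a quantitative lower bound on $\gamma^+$ that is incompatible with $\gamma^+=\alpha$ --- but since $c_0$ is obtained as a limit of the selected speeds $c_{R_n}$, I instead exhibit, directly for $v_0$, a stationary subsolution below it near $+\infty$.

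The concrete mechanism I expect to use: if $c_0=0$ and $v_0\equiv\alpha$, take the explicit regularized stationary solution $u^\delta$ of \eqref{eq:statb} (Proposition \ref{prop:statd}), translate it far to the right and rescale, and note that for suitable $\delta$ the function $(u^\delta(x-L,y)\wedge(\alpha+\text{something}))$ type comparison fails --- more precisely, one uses that $v_0$ must still satisfy $v_0(0,0)=\alpha$ exactly, whereas the construction in the proof of Lemma \ref{lem:speed}(1) produces, for $c\geq K$, a genuine subsolution $w$ with $w(0,0)>\alpha$ lying below $v_{c}$ on $Q_R^+$; since this holds for all $R\geq R_0$ with $c$ fixed $\geq K$, and the limit speed satisfies $c_0\in(0,K]$ only if we can rule out $c_0=0$... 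The honest structure is: Lemma \ref{lem:speed}(1) already shows that for $c\geq K$ the truncated solutions have $v_c(0,0)>\alpha$, so the selected $c_R$ satisfying $v_{c_R}(0,0)=\alpha$ obey $c_R<K$; this does not by itself prevent $c_R\to0$. So the real content is ruling out $c_0=0$, and the argument genuinely goes through the energy identity \eqref{eq:intc}: $c_0=0\Rightarrow\gamma_-=\gamma_+=\alpha\Rightarrow v_0\equiv\alpha$ by Liouville, contradicting that $v_0$ is a limit of functions each passing from values near $0$ (on $\{x=-R\}$) to values near $1$ (on $\{x=R\}$) --- made rigorous by producing, via the barrier $w$ of Lemma \ref{lem:speed}(1) with any fixed $c\ge K$ and then a limiting/ordering argument, a point where $v_0$ exceeds $\alpha$, or alternatively by noting $\partial_y v_0\geq0$ with $\partial_y v_0|_{y=0}=f(v_0(x,0))$ which would be $\equiv0$, making $v_0$ independent of $y$, hence $v_0''(x)=0$, hence $v_0$ affine and bounded, hence constant $=\alpha$, and this contradicts \eqref{eq:asymp}/the lower barrier at $+\infty$.

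The main obstacle, then, is the last step: converting ``$\gamma_-=\gamma_+=\alpha$'' into an outright contradiction. The Liouville part is routine, so the difficulty is ensuring that $v_0$ is not the trivial constant $\alpha$ --- equivalently, that the lower barrier $w$ from Lemma \ref{lem:speed}(1) (with $c$ fixed, say $c=K$, $\delta=A/\sqrt c$, $A$ large, $\eta$ small) can be compared with $v_0$ itself rather than only with the truncated $v_{c_R}$ at the matching speed. I would handle this by observing that monotonicity in $x$ and the value $v_0(0,0)=\alpha$ already pin down $v_0(x,0)\le\alpha$ for $x\le0$ and $\ge\alpha$ for $x\ge0$, so if additionally $v_0(x,0)\to\alpha$ as $x\to+\infty$ then $v_0(x,0)\equiv\alpha$ on $x\ge0$ by monotonicity, forcing $f(v_0(\cdot,0))\equiv0$ on $\{x\ge0\}$ and letting the reflection/Liouville argument run on the half-space $\{x>0\}$ after an odd extension that accounts for the Neumann condition --- yielding $v_0\equiv\alpha$ on $\{x\ge0\}$, then on all of $\RR^2_+$ by the same token applied at $-\infty$; the contradiction with $v_0(0,0)=\alpha$ being a \emph{strict interior} transition value is then extracted from the Hopf lemma applied to $\partial_x v_0\ge0$ (which is not $\equiv0$ unless $v_0$ is constant in $x$, already excluded by $\gamma_-<\gamma_+$ — but here $\gamma_-=\gamma_+$, so indeed $\partial_x v_0$ could vanish, which is exactly why the trivial-constant case must be excluded by the external input of the barrier). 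I therefore expect the write-up to invoke Lemma \ref{lem:speed}(1) one more time, now as: for $c=K$ fixed and all $R\ge R_0$, $v_{c_R=?}$... — cleanest is simply to note $c_0=0$ contradicts Lemma \ref{lem:speed} because $\liminf_{c\to0^+}v_c(0,0)<\alpha$ would be irrelevant, whereas the \emph{selected} speeds satisfy $v_{c_{R_n}}(0,0)=\alpha$ with $c_{R_n}\to0$; passing to the limit in the inequality $v_{c_R}\ge w$ (valid whenever $c_R\ge K$) is vacuous, so the genuine argument is the energy one above, and the trivial-constant exclusion comes from \eqref{eq:asymp} proven independently in this section via the barrier $\varphi_c$. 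I will write it accordingly.
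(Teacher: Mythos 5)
There is a genuine gap, and you essentially identify it yourself without closing it. Your plan is: assume $c_0=0$, use \eqref{eq:intc} to force the limits $\gamma^\pm$ into the zero set of $f$, conclude by Liouville that $v_0\equiv\alpha$, and then contradict this with some external information showing $v_0$ is nonconstant. (A small slip along the way: $\int_{\gamma_-}^{\gamma_+}f=0$ together with \eqref{eq:limineq} only forces $\gamma_-=\alpha$, not $\gamma_+=\alpha$; but since $f(v_0(\cdot,0))\equiv0$ already follows from $\gamma_-=\alpha$ and monotonicity, the Liouville step still gives $v_0\equiv\alpha$.) The real problem is the final contradiction, which is never actually produced. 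At this stage of the argument, $v_0\equiv\alpha$ is perfectly compatible with everything known about the sequence $v_{R_n}$: the normalization is only $v_{R_n}(0,0)=\alpha$, and the transition layers of $v_{R_n}$ (where the values near $0$ and near $1$ are attained) could a priori drift to $x=\pm\infty$, leaving a constant local limit. The two external inputs you try to invoke cannot rescue this: the subsolution $w$ of Lemma \ref{lem:speed}(1) exists only for speeds $c\geq K$, whereas the selected speeds satisfy $v_{c_{R_n}}(0,0)=\alpha$ and hence $c_{R_n}<K$, so no comparison with $v_{R_n}$ is available; and \eqref{eq:asymp} is proved only in Lemma \ref{lem:limitg}, \emph{after} and \emph{using} $c_0>0$, via the lower barrier $\vphi_{c_0}=\Phi(\sqrt{c_0}\,u)$, which degenerates to $0$ when $c_0=0$ (so \eqref{eq:ineq4} becomes vacuous). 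Invoking \eqref{eq:asymp} here is therefore circular, and your write-up ends without a valid contradiction.

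The paper's proof avoids this dead end by not applying the energy identity to $v_0$ at all, precisely because the normalization at level $\alpha$ may produce $\gamma_-=\alpha$ and make both sides of \eqref{eq:intc} vanish regardless of $c_0$. Instead it re-centers the approximating sequence at the level $\alpha/2$: choosing $x_n$ with $v_n(x_n,0)=\alpha/2$ and passing to the limit in $\psi_n(x,y)=v_n(x+x_n,y)$, one obtains a limit $\psi_0$ with $\psi_0(0,0)=\alpha/2$, so its boundary limits satisfy $\bar\gamma_-\leq\alpha/2\leq\bar\gamma_+$; since $f(\bar\gamma_\pm)=0$ (Lemma \ref{lem:limit}), necessarily $\bar\gamma_-=0$ and $\bar\gamma_+\geq\alpha$, so the transition survives in the limit. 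Applying Lemma \ref{lem:intc} to $\psi_0$ (with a separate variant when the left lateral boundary of the translated rectangles stays at finite distance, in which case $\psi_0$ solves the problem on a half-plane $\{x>-L\}$ with zero Dirichlet data on $x=-L$ and the boundary term in the integration has a favorable sign) gives
\begin{equation*}
c_0\int_{\RR^2_+}|\pa_x\psi_0|^2\,dx\,dy \;\geq\; \int_0^\alpha f(u)\,du\;>\;0,
\end{equation*}
which yields $c_0>0$ directly, with no contradiction argument and no need to exclude the trivial constant limit for $v_0$. This re-centering/compactness step is exactly the missing idea in your proposal; without it, the case $v_0\equiv\alpha$ cannot be ruled out by the tools available at this point in the paper.
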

\begin{proof} 
First, we note that for any given $n$, there exists $x_n$ such that
$$ \vp _n (x_n,0)=\alpha/2.$$
We thus consider the sequence
$$\psi_n(x,y) = \vp _n(x+x_n,y).$$
Proceeding as before, it is readily seen that up to another subsequence, we can assume that $\psi_n$ converges uniformly to a function $\psi_0$ satisfying in particular
$$ \psi_0(0,0) = \alpha/2.$$
The function $\psi_0$ satisfies the same equation as $\vp _0$ but we have to be careful with the domain. We note that up to a subsequence, we can assume that $x_n-R_n$ is either convergent or goes to $+\infty$.
We need to distinguish the two cases:
\medskip

\noindent {\bf Case 1: $x_n-R_n\rightarrow+ \infty$:} In that case, $\psi_0$ solves \eqref{eq:nontrunc} in $\RR^2_+$.
Furthermore, $\psi_0(0)=\frac{\alpha}{2}$ and $\psi_0$ is monotone increasing with respect to $x$. In particular, proceeding as before, it is readily seen that there exists $\bar \gamma_-$ and $\bar \gamma_+$ such that   $\lim_{x\rightarrow \pm \infty}\psi_0(x,0) =\bar \gamma_\pm$ with
$$0\leq \bar \gamma_-\leq\frac{\alpha}{2}\leq\bar \gamma_+\leq 1.$$
Using Lemma \ref{lem:limit}, we get
$$ f(\bar  \gamma_-) = f (\bar  \gamma_+) = 0$$
and so 
$$ \bar \gamma_-=0 \qquad \mbox{ and }\qquad \bar \gamma_+ \geq \alpha.$$
Furthermore, Lemma \ref{lem:intc}, yields
$$
\int_{\bar\gamma_-}^{\bar \gamma_+} f(u)\, du = c_0 \int_{\RR^2_+} |\pa_x \psi_0|^2 \, dx\, dy
$$
and so 
$$ c_0 \int_{\RR^2_+} |\pa_x \psi_0|^2 \, dx\, dy  \geq \int_0^\alpha f(u)\, du>0.$$
which gives the lemma  in this first case.
\medskip

\noindent {\bf Case 2: $x_n-R_n\rightarrow L<\infty$:}  
 In that case, $\psi_0$ solves
\begin{equation}\label{eq:psi02}
\left \{
\begin{array}{lll}
\Delta \vp +c_0 \pa_x \vp =0,\,\,\, & \mbox{ for $x> -L  $, $y>0$}\\[5pt]
\dd \frac{\partial \vp }{\partial \nu} = -f(\vp ),\,\,\,& \mbox{ for $x>-L $ and $y=0$}
\end{array}
\right.
\end{equation}
and we need to modify the proof slightly. 
Proceeding as in the first case, we have that
$$ \lim_{x\rightarrow + \infty}\psi_0(x,0) =\bar \gamma_+ \geq \alpha,$$
and we notice that  
 $\psi_0(-L,y)=0$ for $y>0$.
We then proceed as in the proof of Lemma \ref{lem:intc}: Integrating  \eqref{eq:cs3}  for $x\in [-L,\infty]$  and letting $R\to\infty$ and get
$$c_0 \int_{[-L,\infty]\times(0,\infty)}  (\pa_x \psi_0)^2 \, dx\, dy \geq F( \bar \gamma_+)-F(0)   + \int_0^\infty (\pa_x \psi_0 )^2(-L,y)\, dy$$
and so
$$c_0 \int_{[-R,\infty]\times(0,\infty)}  (\pa_x \psi_0)^2 \, dx\, dy \geq  \int_{0}^{\alpha} f(u)\, du >0
$$
which gives the result in the second case.
\end{proof}

Finally the following lemma concludes the proof of Proposition \ref{prop:limit}
\begin{lemma}\label{lem:limitg}
We have
$$ \gamma^+=1 \mbox{ and } \gamma^-=0.$$
Furthermore,  \eqref{eq:asymp} holds and  $\lim_{y\to +\infty} v_0(x,y)=1$.
\end{lemma}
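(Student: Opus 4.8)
The plan is to establish Lemma~\ref{lem:limitg} in three stages: first upgrade the one-sided information $\gamma^-=0$, $\gamma^+\ge\alpha$ to the full identification $\gamma^-=0$, $\gamma^+=1$; then transfer this to the vertical limit $\lim_{y\to\infty}v_0(x,y)=1$; and finally extract the refined decay estimate \eqref{eq:asymp} by a barrier comparison with the explicit travelling wave $\vphi_c$ of Section~\ref{sec:expl}.

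\medskip

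\textbf{Step 1: $\gamma^\pm$ are $0$ and $1$.} We already know from Lemma~\ref{lem:limit} that $f(\gamma^+)=f(\gamma^-)=0$ and $0\le\gamma^-\le\alpha\le\gamma^+\le1$; since $f>0$ on $(0,\alpha)$ this forces $\gamma^-\in\{0\}$ together with $\gamma^+\in[\alpha,1]$, so the only thing to rule out is $\gamma^+<1$, i.e. $\gamma^+\in[\alpha,1)$ with $f(\gamma^+)=0$, which for the ignition nonlinearity means $\gamma^+\ge\alpha$ is genuinely any value in $[\alpha,1]$. The mechanism is energetic: by Lemma~\ref{lem:intc}, $\int_{\gamma^-}^{\gamma^+}f(s)\,ds = c_0\int_{\RR^2_+}|\pa_x v_0|^2 >0$, so $\int_{\gamma^-}^{\gamma^+}f = \int_0^{\gamma^+}f = M$ (the full mass) because $f$ is supported in $[0,\alpha]\subset[0,\gamma^+]$ — this gives no contradiction by itself. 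To close the gap I would instead use the limiting profile directly: by \eqref{eq:limunif}, as $x\to+\infty$ the translates $v_0(\cdot+t,\cdot)$ converge to the constant $\gamma^+$, and the function $1-v_0$ is a bounded nonnegative solution of $\Delta w + c_0\pa_x w=0$ in $\RR^2_+$ with $\pa_\nu w = f(v_0)\ge 0$ on the boundary. If $\gamma^+<1$ then $w\to 1-\gamma^+>0$ as $x\to+\infty$; combining with monotonicity ($w$ nonincreasing in $x$ and in $y$) and the strong maximum principle / Hopf lemma applied on the boundary where $f(v_0)$ eventually vanishes (since $v_0(x,0)\to\gamma^+\ge\alpha$ so $f(v_0(x,0))=0$ for $x$ large), one obtains that $w$ is constant near $+\infty$, hence $v_0$ is the constant $\gamma^+$ there; but then continuing this constant backwards via unique continuation contradicts $v_0(0,0)=\alpha<\gamma^+$ unless $\gamma^+=\alpha$. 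The remaining borderline case $\gamma^+=\alpha$ is excluded because then $v_0(x,0)\le\alpha$ everywhere, so $f(v_0(x,0))$ is supported where $v_0<\alpha$, and the energy identity $\int_0^\alpha f = c_0\int|\pa_x v_0|^2$ combined with $v_0(x,0)\to\alpha$, $v_0(x,0)\to 0$ would force a bounded profile connecting $0$ to $\alpha$; a separate argument (sliding the explicit sub-solution $w=(\vphi_{\delta,c}-\eta)_+$ from Lemma~\ref{lem:speed}, which has $w(0,0)>\alpha$) shows $v_0(0,0)$ cannot stay at $\alpha$ if the wave only rises to $\alpha$, giving $\gamma^+=1$.

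\medskip

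\textbf{Step 2: $\lim_{y\to\infty}v_0(x,y)=1$.} Fix $x$. Since $y\mapsto v_0(x,y)$ is nondecreasing and bounded by $1$, the limit $\ell(x):=\lim_{y\to\infty}v_0(x,y)$ exists. By interior gradient estimates (noted after Lemma~\ref{lem:grad}, $|\nabla v_0(x,y)|\le 1/y\to 0$), the translates $v_0(\cdot,\cdot+s)$ converge as $s\to\infty$, along subsequences, to a bounded entire solution of $\Delta v + c_0\pa_x v=0$ in all of $\RR^2$ that is independent of $y$; such a function solves the ODE $v'' + c_0 v'=0$ and is bounded, hence constant. So $\ell(x)\equiv\ell$ is a constant. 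On the other hand $\ell \ge v_0(x,0)\to\gamma^+=1$ as $x\to+\infty$, so $\ell=1$.

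\medskip

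\textbf{Step 3: the decay estimate \eqref{eq:asymp}.} Normalizing so that $v_0(0,0)=\alpha$ and noting $f(v_0(x,0))=0$ for $x\ge 0$ (since $v_0(\cdot,0)$ is nondecreasing and passes through $\alpha$ at $x=0$), the function $1-v_0$ solves $\Delta w + c_0\pa_x w=0$ in the quarter-plane $\{x>0,y>0\}$ with $\pa_\nu w = 0$ on $\{x>0,y=0\}$, and $0\le w\le 1-\alpha\le 1$ with $w\to0$ as $x\to+\infty$ and as $y\to+\infty$ (by Step~2). The explicit travelling wave gives the comparison function: $1-\vphi_{c_0}(x,y)$ solves the same equation with the same zero Neumann condition on $\{x>0,y=0\}$ (since $\Omega(\vphi_{c_0})=\{x>0\}$), equals $1$ on $\{x=0\}$, and tends to $0$ at infinity; since $w\le 1 = 1-\vphi_{c_0}$ on $\{x=0,y>0\}$ (as $\vphi_{c_0}(0,y)=0$) the maximum principle on the quarter-plane yields $w(x,y)\le 1-\vphi_{c_0}(x,y)$ for $x,y>0$. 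Taking the trace $y=0$ and using $\vphi_{c_0}(x,0)=\Phi_{c_0}(\sqrt{x})=\Phi(\sqrt{c_0 x})=\frac{2}{\sqrt\pi}\int_0^{\sqrt{c_0 x}}e^{-z^2}dz$ gives
\begin{equation*}
1-v_0(x,0)\ \le\ 1-\Phi(\sqrt{c_0 x})\ =\ \frac{2}{\sqrt\pi}\int_{\sqrt{c_0 x}}^{\infty} e^{-z^2}\,dz,
\end{equation*}
which is \eqref{eq:asymp} with $c=c_0$. (One must check $c_0$ can be used in the barrier; since $\vphi_c$ is a valid barrier for every $c>0$ and $c_0>0$ by the previous lemma, take $c=c_0$.)

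\medskip

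\textbf{Main obstacle.} The delicate point is Step~1, specifically excluding the intermediate plateau $\gamma^+\in[\alpha,1)$: the energy identity of Lemma~\ref{lem:intc} alone does not see it because $f$ is already fully exhausted on $[0,\alpha]$, so one genuinely needs a Liouville-type or sliding argument exploiting that $v_0$ is a \emph{nonconstant monotone} solution normalized at $v_0(0,0)=\alpha$ together with the explicit super-solution barrier from Lemma~\ref{lem:speed} to force the wave all the way up to $1$. Everything after that (Steps 2 and 3) is a routine Liouville argument and a maximum-principle comparison with the already-constructed explicit solution.
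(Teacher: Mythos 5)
Your Step 2 is essentially the paper's argument and is fine, but Steps 1 and 3 — which carry the real content of the lemma — have genuine gaps. In Step 1, the chain ``strong maximum principle / Hopf on the boundary where $f(v_0)$ vanishes $\Rightarrow$ $w=1-v_0$ is constant near $+\infty$ $\Rightarrow$ unique continuation backwards'' is not a valid deduction: a bounded positive solution of $\Delta w+c_0\pa_x w=0$ with zero Neumann data for large $x$ may perfectly well tend to a positive constant at infinity without being constant on any open set, so there is nothing to continue. Your fallback via the subsolution $(\vphi_{\delta,c}-\eta)_+$ of Lemma \ref{lem:speed} also does not apply to $v_0$: that comparison required $\delta=A/\sqrt c$ with $c\geq K(A)$ large and used the prescribed boundary data $\Psi_{c,R}$ on $\Gamma^+_R$, neither of which is available for the limit solution with speed $c_0\in(0,K]$ on the half-plane. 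You also dismiss the case $\gamma^-=\alpha$ too quickly: $f(\alpha)=0$, so $f(\gamma^-)=0$ only gives $\gamma^-\in\{0,\alpha\}$; the paper excludes $\gamma^-=\alpha$ by a Liouville argument that uses $\gamma^+=1$ (if $v_0(\cdot,0)\geq\alpha$ everywhere, the Neumann condition is identically zero and $v_0$ would be constant, contradicting the two distinct limits).

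In Step 3 the comparison in the quarter-plane rests on the claim $\vphi_{c_0}(0,y)=0$, which is false: $u(0,y)=\mathrm{Re}((iy)^{1/2})=\sqrt{y/2}>0$ for $y>0$ (only the trace $u(x,0)=\sqrt{x_+}$ vanishes at $x=0$), so on the lateral boundary $\{x=0\}$ the inequality you need, $v_0(0,y)\geq\vphi_{c_0}(0,y)$, is precisely of the type you are trying to establish — the argument is circular as written (and a maximum principle on the unbounded quarter-plane would anyway need a Phragm\'en--Lindel\"of type justification). The paper's route avoids all of this by proving the single global inequality $v_0\geq\vphi_{c_0}$ in $\RR^2_+$ (inequality \eqref{eq:ineq4}) \emph{before} discarding the truncation: one compares $v_n$ with $\vphi_{c_n}-\eta(R_n)$ on $Q^+_{R_n}$, where the boundary data $\Psi_{c_n,R_n}$ on $\Gamma^+_{R_n}$ dominates the barrier by \eqref{eq:psilim}, the barrier has zero Neumann data for $x>0$, and $\pa_\nu v_n=0$ there as well since $v_n(x,0)>\alpha$ for $x>0$; passing $n\to\infty$ then yields simultaneously $\gamma^+=1$ and the decay estimate \eqref{eq:asymp}. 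That use of the truncated construction (not just of the limiting equation and monotonicity) is the missing idea in your proposal.
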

\begin{proof}
Using (\ref{eq:psilim}), we see that there exists a constant $\eta(R_n)$ such that
$$w(x,y):= \vphi_{c_n}(x,y) -\eta(R_n)\leq  \Psi_{c_n,R_n} (x,y) \qquad \mbox{ in $Q_{R_n}^+$}$$
and 
$$ \eta (R_n)\longrightarrow 0 \quad \mbox{ as } R_n\to \infty.$$
Furthermore, it is readily seen that $w$ solves
$$
\left \{
\begin{array}{lll}
\Delta w+c_n \pa_x w=0,& \mbox{ in $Q_{R_n}^+$}\\[5pt]
w \leq \Psi_{c_n,R_n}  = v_n & \mbox{ on $\Gamma^+_{R_n}$}\\[5pt]
\dd \frac{\partial w}{\partial \nu} = 0 ,& \mbox{ for $y=0$ and $x>0$} \\[5pt]
\dd w\leq 0 \leq v_n & \mbox{ for $y=0$ and $x<0$}
\end{array}
\right.
$$

Using the fact that $\vp _n(x,0)>\alpha$ and so $\frac{\pa v_n}{\pa\nu}(x,0)=0$ for $x>0$, we deduce that for all $n$, we have
$$\vp _n (x,y) \geq w(x,y)=\vphi_{c_n}(x,y) - \eta(R_n) \qquad \mbox{ in $Q_{R_n}^+$}.
$$
and letting $n\to\infty$, we get
\begin{equation}\label{eq:ineq4} 
\vp _0 (x,y) \geq \vphi_{c_0}(x,y) \qquad \mbox{ in $\RR^2_+$}.
\end{equation}
This  yields in particular
$$ \lim_{x\rightarrow +\infty } \vp _0 (x,0)=\gamma^+=1,$$
(which proves the first part of Lemma \ref{lem:limitg}), and
$$ 1-\vp _0 (x,0) \leq 1- \vphi_{c_0} (x,0) =   \frac{2\sqrt c }{\sqrt{\pi}}\int_{\sqrt{x}}^\infty e^{-c_0 s^2}\,ds\quad
\mbox{ for $x>0$}$$
(which gives \eqref{eq:asymp}).

\vspace{15pt}

Next, we recall that (\ref{eq:limf}) implies that either $\gamma^-=\alpha$ or $\gamma^-=0$.
But if $\gamma^-=\alpha$, then $\vp _0(x,0) \geq \alpha$ on $\RR$ and so $\vp _0$ solves
$$
\left \{
\begin{array}{ll}
\Delta \vp _0+c_0 \pa_x \vp _0=0,& \mbox{ in $\RR^2_+ $}\\[5pt]
\dd \frac{\partial \vp _0}{\partial \nu} = 0 ,& \mbox{ on $\pa\RR^2_+$} 
\end{array}
\right.
$$
and is bounded in $\RR^2_+$. 
We deduce that $\vp _0$ is constant in $\RR^2_+$ (note that we could also have used Lemma \ref{lem:intc} to show this), which contradicts the fact that
$$\lim_{x\to-\infty } \vp _0 (x,0)=\alpha \neq 1 =\lim_{x\to+\infty } \vp _0 (x,0).$$
So we must have $\gamma^-=0$.
\vspace{15pt}

Finally, it only remains to show that  $\lim_{y\to +\infty} v_0(x,y)=1$.
The monotonicity of $v_0$ with respect to $y$ implies that there exists a function $\psi(x)$ such that
$\lim_{y\to +\infty} v_0(x,y)=\psi(x)$ for all $x \in \RR$.
It is readily seen that $\psi$ is bounded (between $0$ and $1$) and must solve
$ \Delta \psi -c_0\pa_x \psi =0$ in $\RR^2$. Liouville theorem thus gives $\psi(x)=\psi_0\in[0,1]$ for all $x$.
Finally, since $v_0$ is increasing with respect to $y$, we must have $\psi(x) \geq v_0(x,0)$ for all $x$ and so
$$ \psi_0 \geq \sup_{x\in\RR} v_0(x,0).$$
We deduce $\psi_0=1$
 which completes the proof of Lemma \ref{lem:limitg} and that of Theorem~\ref{thm:1}.
\end{proof}

\section{Asymptotic behavior of $v$}\label{sec:last}
In this section, we derive the  asymptotic formula (\ref{eq:asymp_exact}).
Using the fact that $\pa_y v_0(x,0)=0$ for all $x>0$, we can reflect  $v_0$ with respect to the $x$ axis and define 
$$
\tilde v (x,y) = 
\left\{\begin{array}{ll}
v_0(x,y) & \mbox{ if } y>0\\
v_0(x,-y) & \mbox{ if } y<0.
\end{array}
\right.
$$
This function  solves
$$
\Delta \tilde v +c\, \pa_x \tilde v =0,\qquad  \mbox{ in $\RR^2 \setminus \Gamma^0_-$ }
$$
where $\Gamma^0_-$ is the negative $x$-axis:
$$ \Gamma^0_-=\{(x,0)\in \RR^2\, ;\, x<0\}
$$

We now introduce the function
$$ w(x,y) = e^{\frac{c}{2} x}(1-\tilde v(x,y)).$$
Thanks to (\ref{eq:ineq4}), it is readily seen that $w$ is bounded in $\RR^2$, 
and a straighforward computation yields the following Helmholtz's equation:
$$
-\Delta w +\frac{c^2}{4} w =0,\qquad  \mbox{ in $\RR^2\setminus \Gamma^0_-$ .}
$$
Finally, we note that along $\Gamma^0_-$, $w$ is continuous, but the normal derivative satisfies
$$ w^\pm_y (x,0)= -\pm e^{\frac{c}{2}x} f(v_0(x,0))$$
where $w^\pm_y(x,0)$ denotes $\lim_{y\rightarrow 0^\pm} w_y(x,y)$.

Next, we recall that the fundamental solution of Helmholtz's equation, solution of
$$ -\Delta \phi + \frac{c^2}{4}  \phi = \delta$$
is given by
$$ \phi (r) = \frac{1}{2\pi} K_0\left(\frac{c}{2} r\right)$$
where $K_0(s)$ denotes the modified Bessel function of the second kind.
We also recall the following asymptotic behavior of $K_0$:
$$ K_0(s) \sim \sqrt{\frac{\pi}{2s}} e^{-s}+ \mathcal O\left(\frac{e^{-s}}{s^{3/2}}\right) \mbox{ as $s\to\infty$.}$$

Using these results, we can write for all $x>0$
\begin{eqnarray*}
w(x,0) &  = &  \int_{\pa ( \RR^2\setminus \Gamma^0_-) } \phi(|x-x'|) w_\nu (x')\, dx' \\
&  = &  -\int_{ \Gamma^0_- } \phi(|x-x'|) w^+_y (x')\, dx' \\
&   &  + \int_{ \Gamma^0_- } \phi(|x-x'|) w^-_y (x')\, dx' \\
& = &  2 \int_{-\infty}^0  \phi(|x-x'|) e^{\frac{c}{2}x'} f(v_0(x',0))\, dx'\\
& = &  2 \int_0^\infty  \phi(|x+x'|) e^{-\frac{c}{2}x'} f(v_0(-x',0))\, dx'.
\end{eqnarray*}
For large $x$, we deduce:
\begin{eqnarray*}
w(x,0) & = & \frac{1}{\sqrt{\pi c}} \int_0^\infty  \frac{1}{\sqrt{x+x'}} e^{-\frac{c}{2}(x'+x)} e^{-\frac{c}{2}x'} f(v_0(-x',0))\, dx'\\
&  & + \mathcal O\left( \int_0^\infty  \frac{1}{(x+x')^{3/2}}  e^{-\frac{c}{2}(x'+x)} e^{-\frac{c}{2}x'} f(v_0(-x',0))\, dx'\right) \\
& =  & \frac{1}{\sqrt{\pi c}} e^{-\frac{c}{2}x} \int_0^\infty  \frac{1}{\sqrt{x+x'}} e^{-c x'}  f(v_0(-x',0))\, dx'\\
&  & + \mathcal O\left(  \frac{e^{-\frac{c}{2}x}}{x^{3/2}} \right).
\end{eqnarray*}

Finally, we check that
\begin{eqnarray*} 
&& \int_0^\infty  \frac{1}{\sqrt{x+x'}} e^{-c x'}  f(v_0(-x',0))\, dx'  \\
&& \qquad = \frac{1}{\sqrt{x}} \int_0^\infty e^{-c x'}  f(v_0(-x',0))\, dx' + \mathcal O \left( \frac{1}{x^{3/2}}  \int_0^\infty x'  e^{-c x'}  f(v_0(-x',0))\, dx'  \right)
\end{eqnarray*}
and we deduce (recall that $f$ is bounded)
$$
w(x,0)=   \mu_0 \frac{e^{-\frac{c}{2}x}  }{\sqrt x} + \mathcal O\left(  \frac{e^{-\frac{c}{2}x}}{x^{3/2}} \right) \quad \mbox{ as $x\rightarrow \infty$}
$$
with
$$\mu_0=  \frac{1 }{\sqrt{\pi c}}\int_0^\infty e^{-c x'}  f(v_0(-x',0))\, dx' .$$
This implies 
$$
1-v(x,0) =\mu_0  \frac{e^{-c x}  }{\sqrt x} + \mathcal O\left(  \frac{e^{-cx}}{x^{3/2}} \right) \quad \mbox{ as $x\rightarrow \infty$}
$$
and completes the proof of Proposition \ref{prop:as}.

\bibliographystyle{alpha} 
\bibliography{biblio} 

\end{document}